\let\reftagform@=\tagform@
\def\tagform@#1{\maketag@@@{(\ignorespaces\textcolor{blue}{#1}\unskip\@@italiccorr)}}
\renewcommand{\eqref}[1]{\textup{\reftagform@{\ref{#1}}}}
\newtheorem{theorem}{Theorem}
\theoremstyle{plain}
\newtheorem{corollary}{Corollary}
\newtheorem{lemma}{Lemma}
\newtheorem{remark}{Remark}
\numberwithin{equation}{section}
\DeclareMathOperator{\spe}{sp}
\DeclareMathOperator{\re}{Re}   
\DeclareMathOperator{\im}{Im}
\def\etal{et al.\,}
\begin{document}
	\title[The generalized Schwarz inequality for semi-Hilbertian space operators]{The generalized Schwarz inequality for semi-Hilbertian space operators and Some $A$-numerical radius inequalities}
	\author[M.W. Alomari]{Mohammad W. Alomari}
	
	\address {Department of Mathematics, Faculty of Science and Information	Technology, Irbid National University,  P.O. Box 2600, Irbid, P.C. 21110, Jordan.}
	\email{mwomath@gmail.com}

	\date{\today}
	\subjclass[2010]{Primary: 47A12, 47A63, 47B65.  Secondary:  15A18  , 15A45.}
	
	\keywords{Positive operator, Semi-inner product, The mixed Schwarz inequality, $A$-numerical radius}

	\begin{abstract}
	   In this work, the mixed Schwarz inequality for semi-Hilbertian space operators is proved. Namely, for every positive Hilbert space operator 
	  $A$.  If $f$ and $g$ are nonnegative continuous functions on $\left[0,\infty\right)$ satisfying $f(t)g(t) =t$ $(t\ge0)$, then
	   \begin{align*}
	   \left| {\left\langle {T   x,y} \right\rangle_A } \right|   \le 
	   \left\| {f\left( {\left| T \right|_A x} \right)} \right\|_A \left\|  {g\left( {\left| {T^{\sharp_A } } \right|_A y} \right)} \right\|_A
	   \end{align*}
	   for every Hilbert space operator  $T$ such that  the range of $T^* A$ is a subset in the range of  $A$,  such that $A$ commutes with $T$,  and for all vectors $x,y\in \mathscr{H}$, where $\left| T \right|_A = \left(AT^{\sharp_A}T\right)^{1/2}$ such that $T^{\sharp_A}=A^{\dag}T^*A$, where $A^{\dag}$  is the Moore-Penrose inverse of $A$.   Based on that, some inequalities for the $A$-numerical radius are introduced.
	\end{abstract}
	
	\maketitle

\section{Introduction} 

 Let $\mathscr{B}\left( \mathscr{H}\right) $ be the Banach algebra
 of all bounded linear operators defined on a complex Hilbert space
 $\left( \mathscr{H};\left\langle \cdot ,\cdot \right\rangle
 \right)$  with the identity operator  $1_\mathscr{H}$ in
 $\mathscr{B}\left( \mathscr{H}\right) $. 
 For $A\in \mathscr{B}\left( \mathscr{H}\right)$ we denote by $\mathcal{R}\left(A\right)$ and $\mathcal{N}\left(A\right)$ the range and the null
 space of $A$, respectively. By $\overline{\mathcal{R}\left(A\right)}$ we denote the norm closure of  $\mathcal{R}\left(A\right)$. Let $T^*$ be
 the adjoint of $T$ . The cone of all positive (semidefinite) operators is given by
 \begin{align*}
 \mathscr{B}^+\left( \mathscr{H}\right) = \left\{
 A \in \mathscr{B}\left( \mathscr{H}\right) :\left\langle {Ax,x} \right\rangle  \ge 0,\forall x \in \mathscr{H}\right\}.
 \end{align*}
Every $A\in  \mathscr{B}^+\left( \mathscr{H}\right)$ defines the following positive semidefinite sesquilinear form: 
\begin{align*}
 \left\langle { \cdot , \cdot } \right\rangle _A :\mathscr{H} \times\mathscr{H} \longrightarrow \mathbb{C},\left( {x,y} \right) \mapsto \left\langle {x,y} \right\rangle _A  = \left\langle {Ax,y} \right\rangle.
\end{align*} 
The seminorm induced by this sesquilinear form is given by $\left\|x\right\|_A=\sqrt{ \left\langle {x,x} \right\rangle_A }$. It is well-known that $\left\|x\right\|_A$  is a norm on $ \mathscr{H}$ if and only if $A$ is injective and
 $\left( \mathscr{H},  \left\|  \cdot  \right\|_A\right)$ is complete if and only if $\mathcal{R}\left(A\right)$ is closed in $\mathscr{H}$.
 
 An operator $S\in \mathscr{B}\left( \mathscr{H}\right)$ is said to be $A$-adjoint of an operator $T\in \mathscr{B}\left( \mathscr{H}\right)$ if $\left\langle {Tx,y} \right\rangle_A  = \left\langle {x,Sy} \right\rangle_A$. In other words, $S$ is an $A$-adjoint of $T$ if and only if $S$ is a solution of the equation  $AX = T^*A$ in $\mathscr{B}\left( \mathscr{H}\right)$. For $T\in \mathscr{B}\left( \mathscr{H}\right)$ the existence of an $A$-adjoint of $T$ is not guaranteed. The set of all operators acting on $\mathscr{H}$ that admit $A$-adjoints is denoted
 by $\mathscr{B}_A\left( \mathscr{H}\right)$. The existence of such set of operators is guaranteed by Douglas theorem \cite{D} that
 \begin{align*}
 \mathscr{B}_A \left( \mathscr{H} \right) = \left\{ {T \in  \mathscr{B} \left( \mathscr{H} \right):\mathcal{R}\left( {T^* A} \right) \subseteq \mathcal{R}\left( A \right)} \right\}.
 \end{align*}
Moreover, if $T\in $ then the operator equation $AX =T^*A$
has a unique solution, denoted by $T^{\sharp_A}$, satisfying $\mathcal{R}\left( {T^{\sharp_A}} \right) \subseteq \overline{\mathcal{R}\left( A \right)}$ and $\mathcal{N}\left(T^{\sharp_A}\right)\subseteq \mathcal{N}\left(T^*A\right)$. The distinguished $A$-adjoint operator of $T$ or simply $T^{\sharp_A}$ and can be computed as $T^{\sharp_A}=A^{\dag}T^*A^*$,  and satisfy the equation $AT^{\sharp_A}=T^*A$, where $A^{\dag}$  is the Moore-Penrose inverse of $A$ (see \cite{ACG1} and \cite{ACG2}).
  
Denotes $\mathscr{B}_{A^{1/2}} \left( \mathscr{H} \right)$ the set of all operators $T\in \mathscr{B}  \left( \mathscr{H} \right)$
such that $T$ is bounded induced by the semi-norm $\left\|\cdot\right\|_A$. In other words,
\begin{align*}
\mathscr{B}_{A^{1/2}} \left( \mathscr{H} \right):= \left\{T\in \mathscr{B}  \left( \mathscr{H} \right): \left\|Tx\right\|_A\le \lambda \left\|x\right\|_A, \text{for some $\lambda >0$ and all} \,\, x\in \mathscr{H}\right\}.
\end{align*}
Members of $\mathscr{B}_{A^{1/2}} \left( \mathscr{H} \right)$ are called $A$-bounded operators. In fact, if $T\in \mathscr{B}_{A^{1/2}} \left( \mathscr{H} \right)$, then the $A$-operator seminorm is defined as: 
\begin{align*}
\left\|T\right\|_A=\mathop {\sup }\limits_{\scriptstyle x \in \overline {\mathcal{R}\left( A \right)}  \hfill \atop \scriptstyle \,\,\,\,\,x \ne 0 \hfill} \frac{{\left\| {Tx} \right\|_A }}{{\left\| x \right\|_A }} = \sup \left\{ {\left\| {Tx} \right\|_A :x \in \mathscr{H},\left\| x \right\|_A  = 1} \right\}.
\end{align*}
It is convenient to note that it may happen that $\left\|T\right\|_A=+\infty$ for some operator $T\in \mathscr{B} \left( \mathscr{H} \right)\backslash \mathscr{B}_{A^{1/2}} \left( \mathscr{H} \right)$. Also, we need to mention that the inclusions
\begin{align*}
\mathscr{B}_{A} \left( \mathscr{H} \right) \subset \mathscr{B}_{A^{1/2}} \left( \mathscr{H} \right) \subset \mathscr{B} \left( \mathscr{H} \right)
\end{align*}
hold with equality if $A$ is injective and has closed range. But neither $\mathscr{B}_{A} \left( \mathscr{H} \right) $ nor $\mathscr{B}_{A^{1/2}} \left( \mathscr{H} \right)$ is closed and dense in $\mathscr{B} \left( \mathscr{H} \right)$.

In particular, we should note that if $T\in \mathscr{B}_{A} \left( \mathscr{H} \right) $ then $T^{\sharp_A} \in \mathscr{B}_{A} \left( \mathscr{H} \right) $ and $\left(T^{\sharp_A}\right)^{\sharp_A}=P_ATP_A$, where $P_A$ denotes the orthogonal projection onto $\overline{\mathcal{R}\left(A\right)}$. Moreover,  we have
\begin{align*}
\left\|T^{\sharp}\right\|_A=\left\|T\right\|_A.
\end{align*} 

An operator $T\in \mathscr{B}_A\left( \mathscr{H} \right) $ is called $A$-selfadjoint if $AT$ is selfadjoint, i.e., $AT=T^*A$, or simply $
\left\langle {Tx,x} \right\rangle_A \in \mathbb{R}$ for all $x\in
\mathscr{H}$. Also, $T$ is called $A$-positive if $AT$ is positive $AT>0$. Note that if $T$ is $A$-selfadjoint then $T \in \mathscr{B}_{A} \left( \mathscr{H} \right)$. An operator $T \in \mathscr{B}_{A} \left( \mathscr{H} \right)$ is said to be $A$-normal if $T T^{\sharp_A} = T^{\sharp_A}T$. The fact that every selfadjoint operator is normal does not hold in this case; i.e., an $A$-selfadjoint operator
is not necessarily $A$-normal (see \cite[Example 5.1]{BFA}). Indeed, this property holds if $T$ commutes with $A$. We note that, for any $T\in \mathscr{B}_A\left( \mathscr{H} \right) $ the $A$-Cartesian decomposition is given by $T=\re_A\left(T\right)+i\im_A\left(T\right)$, where $\re_A\left(T\right)=\frac{T+T^{\sharp_A}}{2}$ and $\im_A\left(T\right)=\frac{T-T^{\sharp_A}}{2i}$. Moreover, $\re_A\left(T\right)$ and $\im_A\left(T\right)$ are $A$-selfadjoint operators.\\

In 2012, Saddi \cite{S}, introduced the definition of $A$-spectral radius as follows:
\begin{align}
\label{eq1.1}r_A\left(T\right)= \mathop {\lim }\limits_{n \to \infty } \sup \left\| {T^n } \right\|_A^{\frac{1}{n}}. 
\end{align}
But indeed, this formula was recently proved by Feki in \cite{Kais3}, where he gave  a counterexample showing that the definition of Saddi in \cite{S} doesn't guarantee that
$r_A\left(T\right) < \infty$. 
The Feki definition of $A$-spectral radius  reads: 
\begin{align}
\label{eq1.2}r_A\left(T\right)=
\mathop {\inf }\limits_{n \in N} \left\| {T^n } \right\|_A^{\frac{1}{n}}.
\end{align}

For $A$-bounded linear operator $T$ on a Hilbert space
$\mathscr{H}$, the $A$-numerical range $W_A\left(T\right)$ is the image
of the  unit sphere of $\mathscr{H}$ under the positive semidefinite sesquilinear quadratic form $x\to
\left\langle {Tx,x} \right\rangle_A$ associated with the operator.
More precisely,
\begin{align*}
W_A\left( T \right) = \left\{ {\left\langle {Tx,x} \right\rangle_A :x
	\in \mathscr{H},\left\| x \right\|_A = 1} \right\}
\end{align*}
Also, the $A$-numerical radius is defined to be
\begin{align*}
w_A\left( T \right) = \sup \left\{ {\left| \lambda\right|:\lambda
	\in W_A\left( T \right) } \right\} = \mathop {\sup }\limits_{\left\|
	x \right\|_A = 1} \left| {\left\langle {Tx,x} \right\rangle_A }
\right|.
\end{align*}
For more about properties of $A$-numerical range and $A$-numerical radius, see \cite{BFA}--\cite{BKP2}, \cite{Kais1}, \cite{MXZ}, \cite{RSM}, and \cite{Z}.

Recently, it was shown that the inequality \cite{Kais3} (see also \cite{Kais4})
\begin{align*}
r_A\left(T\right) \le w_A\left(T\right) \le \left\|T\right\|_A
\end{align*}
for any $T\in \mathscr{B}_{A^{1/2}}\left(\mathscr{H}\right)$. Also, $\left\|\cdot\right\|_A$ and $w_A\left(T\right)$ are equivalent seminorm on $  \mathscr{B}_{A^{1/2}}\left(\mathscr{H}\right)$ satisfying the inequality:
\begin{align*}
\frac{1}{2}\left\|T\right\|_A \le w_A\left(T\right) \le \left\|T\right\|_A.
\end{align*}
The first inequality becomes equality if $AT^2 = 0$ and the second inequality becomes equality if $T$ is $A$-normal (see \cite{Kais3}).

The Schwarz inequality for positive operators reads that if $A$ is a positive operator in $\mathscr{B}\left(\mathscr{H}\right)$, then
\begin{align}
\left| {\left\langle {Ax,y} \right\rangle} \right|  ^2  \le \left\langle {A x,x} \right\rangle \left\langle { A y,y} \right\rangle  \label{eq1.4}
\end{align}
for any   vectors $x,y\in \mathscr{H}$.

In 1951, Reid \cite{R} proved an inequality which in some senses
considered a variant of the Schwarz inequality. In fact, he proved
that for all operators $A\in \mathscr{B}\left( \mathscr{H}\right)
$ such that $A$ is positive and $AB$ is selfadjoint then
\begin{align}
\left| {\left\langle {ABx,y} \right\rangle} \right|  \le \|B\|
\left\langle {A x,x} \right\rangle, \label{eq1.5}
\end{align}
for all $x\in \mathscr{H}$. In \cite{H}, Halmos presented his
stronger version of the Reid inequality \eqref{eq1.5} by replacing
$r\left(B\right)$ instead of $\|B\|$.

In 1952, Kato  \cite{TK} introduced a companion inequality of
\eqref{eq1.4}, called  the mixed Schwarz inequality,  which
asserts
\begin{align}
\left| {\left\langle {Ax,y} \right\rangle} \right|  ^2  \le \left\langle {\left| A \right|^{2\alpha } x,x} \right\rangle \left\langle {\left| {A^* } \right|^{2\left( {1 - \alpha } \right)} y,y} \right\rangle, \qquad 0\le \alpha \le 1. \label{eq1.6}
\end{align}
for every   operators $A\in \mathscr{B}\left( \mathscr{H}\right) $ and any vectors $x,y\in \mathscr{H}$, where  $\left|A\right|=\left(A^*A\right)^{1/2}$.

In 1988,  Kittaneh  \cite{FK2} proved  a very interesting extension combining both the Halmos--Reid inequality \eqref{eq1.2} and the mixed Schwarz inequality \eqref{eq1.6}. His result reads that
\begin{align}
\left| {\left\langle {ABx,y} \right\rangle } \right| \le r\left(B\right)\left\| {f\left( {\left| A \right|} \right)x} \right\|\left\| {g\left( {\left| {A^* } \right|} \right)y} \right\|\label{eq1.7}
\end{align}
for any   vectors $x,y\in  \mathscr{H} $, where $A,B\in \mathscr{B}\left( \mathscr{H}\right)$ such that $|A|B=B^*|A|$ and    $f,g$ are  nonnegative continuous functions  defined on $\left[0,\infty\right)$ satisfying that $f(t)g(t) =t$ $(t\ge0)$.       Clearly, choose $f(t)=t^{\alpha}$ and $g(t)=t^{1-\alpha}$ with   $B=1_{\mathscr{H}}$ we refer to \eqref{eq1.6}. Moreover, choosing $\alpha=\frac{1}{2}$ some manipulations refer to the Halmos version of the Reid inequality.\\

In this work,  the corresponding version of the well-known Kittaneh  inequality \eqref{eq1.7}, which is also known as the mixed Schwarz inequality for the semi-Hilbertain space operators is introduced. Based on that, some inequalities for the $A$-numerical radius are proved. A generalization of the Euclidean operator $A$-radius with some basic properties are discussed and elaborated. A generalization of an important inequality proved recently by Feki in \cite{Kais2} for the generalized Euclidean operator $A$-radius is also considered.

\section{Preliminaries and Lemmas}

 The corresponding version of Schwarz inequality for $A$-positive operators reads that if $T$ is $A$-positive operator in $\mathscr{B}_A\left(\mathscr{H}\right)$, then
 \begin{align}
 \label{eq2.1}\left| {\left\langle {Tx,y} \right\rangle_A} \right|  ^2  \le \left\langle {T x,x} \right\rangle_A \left\langle { T y,y} \right\rangle_A
 \end{align}
 for any   vectors $x,y\in \mathscr{H}$. The proof of this result can be done using the same argument of the proof of the classical Schwarz inequality for positive operators taking into account that we use the semi-inner product induced by $A\in \mathscr{B}^+\left(\mathscr{H}\right)$.
 
  In order to introduce the corresponding version of the mixed Schwarz inequality (..) for semi-Hilbertian space oprtators we need the following sequence of lemmas; which have been pointed out in ...  but for general Hilbert space operators. We rewrite these lemmas in appropriate way for semi-Hilbertian space oprtators. 

 \begin{lemma}
 \label{lemma1}	Let $A\in \mathscr{B}^+\left(\mathscr{H}\right)$. Let $T,S$ and $R$ be operators in $\mathscr{B}_{A }\left(\mathscr{H}\right)$, where $T$ and $S$ are $A$-positive. Then $\left[ {\begin{array}{*{20}c}
 		T & {R^{\sharp_A } }  \\
 		R & S  \\
 		\end{array}} \right]$ is  ${\bf{A}}$-positive operator in $\mathscr{B}_{{\bf{A}} }\left(\mathscr{H}\oplus\mathscr{H}\right)$ if and only if $\left| {\left\langle {Rx,y} \right\rangle _A } \right|^2  \le \left\langle {Tx,x} \right\rangle _A \left\langle {Sy,y} \right\rangle _A$ for all $x,y\in \mathscr{H}$, where $
 	{\bf{A}} = \left[ {\begin{array}{*{20}c}
 		A & 0  \\
 		0 & A  \\
 		\end{array}} \right]\in \mathscr{B}^+\left(\mathscr{H}\oplus\mathscr{H}\right)$.
 \end{lemma}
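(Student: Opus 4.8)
The plan is to unwind the definition of $\mathbf{A}$-positivity into a scalar inequality for the semi-inner product $\left\langle\cdot,\cdot\right\rangle_A$, and then to recognize the resulting condition as equivalent to the asserted estimate via a standard rotation-and-scaling argument transported to the seminorm setting. Denote by $\mathbf{T}$ the block operator displayed in the statement. First I would record that $\mathbf{T}$ is $\mathbf{A}$-positive precisely when $\left\langle \mathbf{T}\xi,\xi\right\rangle_{\mathbf{A}}\ge 0$ for every $\xi=\left(x,y\right)\in\mathscr{H}\oplus\mathscr{H}$, where $\left\langle \left(u_1,u_2\right),\left(v_1,v_2\right)\right\rangle_{\mathbf{A}}=\left\langle u_1,v_1\right\rangle_A+\left\langle u_2,v_2\right\rangle_A$. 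Membership of $\mathbf{T}$ in $\mathscr{B}_{\mathbf{A}}\left(\mathscr{H}\oplus\mathscr{H}\right)$ follows from $T,S,R\in\mathscr{B}_A\left(\mathscr{H}\right)$, and as in the classical case nonnegativity of the quadratic form already forces $\mathbf{A}\mathbf{T}$ to be selfadjoint and positive.

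Computing directly from $\mathbf{T}\left(x,y\right)=\left(Tx+R^{\sharp_A}y,\,Rx+Sy\right)$ and using the defining relation $\left\langle Rx,y\right\rangle_A=\left\langle x,R^{\sharp_A}y\right\rangle_A$ of the $A$-adjoint (which gives $\left\langle R^{\sharp_A}y,x\right\rangle_A=\overline{\left\langle Rx,y\right\rangle_A}$), together with the $A$-selfadjointness of $T$ and $S$, I would obtain
\begin{align*}
\left\langle \mathbf{T}\left(x,y\right),\left(x,y\right)\right\rangle_{\mathbf{A}} = \left\langle Tx,x\right\rangle_A+\left\langle Sy,y\right\rangle_A+2\re\left\langle Rx,y\right\rangle_A.
\end{align*}
Hence $\mathbf{A}$-positivity is equivalent to requiring $\left\langle Tx,x\right\rangle_A+\left\langle Sy,y\right\rangle_A+2\re\left\langle Rx,y\right\rangle_A\ge 0$ for all $x,y$, where both $\left\langle Tx,x\right\rangle_A$ and $\left\langle Sy,y\right\rangle_A$ are nonnegative reals since $T,S$ are $A$-positive.

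For the direction assuming the Schwarz-type inequality, I would estimate $2\re\left\langle Rx,y\right\rangle_A\ge -2\left|\left\langle Rx,y\right\rangle_A\right|\ge -2\sqrt{\left\langle Tx,x\right\rangle_A\left\langle Sy,y\right\rangle_A}$ using the hypothesis, and then combine with the arithmetic–geometric mean inequality $\left\langle Tx,x\right\rangle_A+\left\langle Sy,y\right\rangle_A\ge 2\sqrt{\left\langle Tx,x\right\rangle_A\left\langle Sy,y\right\rangle_A}$ to see that the quadratic form dominates $\bigl(\sqrt{\left\langle Tx,x\right\rangle_A}-\sqrt{\left\langle Sy,y\right\rangle_A}\bigr)^2\ge 0$, giving $\mathbf{A}$-positivity.

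For the converse I would fix $x,y$, write $\left\langle Rx,y\right\rangle_A=e^{i\phi}\left|\left\langle Rx,y\right\rangle_A\right|$, and replace $x$ by $-e^{-i\phi}x$; this leaves $\left\langle Tx,x\right\rangle_A$ unchanged and arranges $\re\left\langle Rx,y\right\rangle_A=-\left|\left\langle Rx,y\right\rangle_A\right|$. Then I would rescale via $x\mapsto tx$, $y\mapsto t^{-1}y$ with $t>0$, which fixes $\left\langle Rx,y\right\rangle_A$, so the hypothesis reads $t^2\left\langle Tx,x\right\rangle_A+t^{-2}\left\langle Sy,y\right\rangle_A\ge 2\left|\left\langle Rx,y\right\rangle_A\right|$ for all $t>0$; minimizing the left side over $t$ yields $2\sqrt{\left\langle Tx,x\right\rangle_A\left\langle Sy,y\right\rangle_A}$, and squaring gives the claim. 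The main (if minor) obstacle is the degenerate case $\left\langle Tx,x\right\rangle_A=0$ or $\left\langle Sy,y\right\rangle_A=0$, handled by letting $t\to\infty$ (respectively $t\to 0$) to force $\left\langle Rx,y\right\rangle_A=0$, whence the inequality holds trivially. Otherwise the only point to watch is that $\left\langle\cdot,\cdot\right\rangle_A$ is merely a semi-inner product, but since every step uses only sesquilinearity and nonnegativity of the induced seminorm, the argument transfers verbatim from the Hilbert space setting.
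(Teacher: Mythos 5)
Your proof is correct and follows exactly the route the paper intends: the paper's own ``proof'' is a one-line citation to Kittaneh's Lemma~1 with the inner product replaced by $\left\langle \cdot,\cdot\right\rangle_A$, and what you have written is precisely that standard $2\times 2$ block-positivity argument (quadratic form expansion, AM--GM for one direction, phase rotation and scaling $x\mapsto tx$, $y\mapsto t^{-1}y$ for the other) transported to the seminorm setting, with the degenerate cases handled properly. No gaps; you have simply supplied the details the paper leaves to the reference.
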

 \begin{proof}
 	The proof is straightforward by replacing the inner product 
 	$\left\langle { \cdot , \cdot } \right\rangle$ by the semi-inner product $\left\langle { \cdot , \cdot } \right\rangle_A$ and setting $A=T$, $B=S$, $C=R$ and $C^*=R^{\sharp_A}$,  in \cite[Lemma 1]{FK2}.
 \end{proof}
 
 \begin{lemma}
  \label{lemma2}	Let $A\in \mathscr{B}^+\left(\mathscr{H}\right)$. Let $T,S$ and $R$ be operators in $\mathscr{B}_{A }\left(\mathscr{H}\right)$, where $T$ and $S$ are $A$-positive and $SR=RT$. If $\left[ {\begin{array}{*{20}c}
 		T & {R^{\sharp_A } }  \\
 		R & S  \\
 		\end{array}} \right]$ is  ${\bf{A}}$-positive operator in $\mathscr{B}_{{\bf{A}} }\left(\mathscr{H}\oplus\mathscr{H}\right)$, then $
 	\left[ {\begin{array}{*{20}c}
 		{f^2 \left( T \right)} & {R^{\sharp_A } }  \\
 		R & {g^2 \left( S \right)}  \\
 		\end{array}} \right]$ is also ${\bf{A}}$-positive,    where where $f$ and $g$ are non-negative functions on $ \left[ {0,\infty } \right)$ which are continuous
 	and satisfying the relation $f(t)g(t)=t$ for all $t\in \left[ {0,\infty } \right)$,  and $
 	{\bf{A}} = \left[ {\begin{array}{*{20}c}
 		A & 0  \\
 		0 & A  \\
 		\end{array}} \right]\in \mathscr{B}^+\left(\mathscr{H}\oplus\mathscr{H}\right)$.
 \end{lemma}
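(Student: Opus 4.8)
The plan is to use Lemma \ref{lemma1} as a dictionary in both directions, translating each statement about $\mathbf{A}$-positivity of a $2\times 2$ block operator into an equivalent scalar inequality that can then be attacked with the $A$-functional calculus. By Lemma \ref{lemma1}, the hypothesis is equivalent to
\begin{align*}
\left|\left\langle Rx,y\right\rangle_A\right|^2 \le \left\langle Tx,x\right\rangle_A \left\langle Sy,y\right\rangle_A
\end{align*}
for all $x,y\in\mathscr{H}$, and, again by Lemma \ref{lemma1}, the asserted conclusion is equivalent to
\begin{align*}
\left|\left\langle Rx,y\right\rangle_A\right|^2 \le \left\langle f^2(T)x,x\right\rangle_A \left\langle g^2(S)y,y\right\rangle_A
\end{align*}
for all $x,y\in\mathscr{H}$. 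Thus the whole problem reduces to promoting the first inequality to the second, the only additional data being the intertwining $SR=RT$ and the relation $f(t)g(t)=t$.

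The first step is to record this intertwining at the level of the functional calculus. From $SR=RT$ one gets $S^nR=RT^n$ for every $n$ by an immediate induction, hence $p(S)R=Rp(T)$ for every polynomial $p$; approximating a continuous $h$ on $[0,\infty)$ uniformly by polynomials on a compact interval containing the relevant spectra and passing to the limit in the $A$-operator seminorm yields $h(S)R=Rh(T)$ for every continuous $h$. Here one uses the functional calculus for $A$-positive operators, under which $h(T)$ and $h(S)$ are again $A$-positive whenever $h\ge 0$; this is exactly what will make Lemma \ref{lemma1} applicable to the conclusion.

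For the main computation I first reduce to the case in which $T$ and $S$ are $A$-positive and \emph{invertible}. Replacing $T,S$ by $T_\varepsilon=T+\varepsilon 1_{\mathscr{H}}$ and $S_\varepsilon=S+\varepsilon 1_{\mathscr{H}}$ preserves both hypotheses: $S_\varepsilon R=(S+\varepsilon)R=R(T+\varepsilon)=RT_\varepsilon$, while the block operator acquires the extra summand $\varepsilon\, 1_{\mathscr{H}\oplus\mathscr{H}}$, which is $\mathbf{A}$-positive, so the sum stays $\mathbf{A}$-positive. On the spectra of the now-invertible $T_\varepsilon,S_\varepsilon$ the functions $\phi(t)=f(t)/\sqrt{t}$ and $\psi(t)=g(t)/\sqrt{t}$ are continuous, and they satisfy $\phi(t)\psi(t)=1$, $t\phi(t)^2=f(t)^2$ and $t\psi(t)^2=g(t)^2$. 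Applying the first inequality (for $T_\varepsilon,S_\varepsilon$) to the vectors $\phi(T_\varepsilon)x$ and $\psi(S_\varepsilon)y$, using the intertwining of the previous step together with the $A$-selfadjointness of $\phi(S_\varepsilon),\psi(S_\varepsilon)$ to write
\begin{align*}
\left\langle R\,\phi(T_\varepsilon)x,\psi(S_\varepsilon)y\right\rangle_A=\left\langle \psi(S_\varepsilon)\phi(S_\varepsilon)Rx,y\right\rangle_A=\left\langle Rx,y\right\rangle_A,
\end{align*}
and observing that $\left\langle T_\varepsilon\phi(T_\varepsilon)x,\phi(T_\varepsilon)x\right\rangle_A=\left\langle f^2(T_\varepsilon)x,x\right\rangle_A$ (and similarly for $S_\varepsilon$), gives
\begin{align*}
\left|\left\langle Rx,y\right\rangle_A\right|^2\le \left\langle f^2(T_\varepsilon)x,x\right\rangle_A\left\langle g^2(S_\varepsilon)y,y\right\rangle_A.
\end{align*}
Letting $\varepsilon\to 0^{+}$ and using continuity of $f^2,g^2$, so that the quadratic forms converge, delivers the second inequality, and Lemma \ref{lemma1} then returns the desired $\mathbf{A}$-positivity.

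The step I expect to be delicate is not the algebra but the functional-calculus bookkeeping in the semi-Hilbertian setting: justifying that for an $A$-positive operator $T$ the object $f(T)$ is well defined, is again $A$-positive, depends continuously on $T$ in the $A$-seminorm, and obeys both the multiplicativity $\psi(S_\varepsilon)\phi(S_\varepsilon)=(\psi\phi)(S_\varepsilon)=1_{\mathscr{H}}$ and the intertwining $h(S)R=Rh(T)$. Once this calculus is in place, most transparently by descending to the Hilbert space induced by $\langle\cdot,\cdot\rangle_A$ on $\overline{\mathcal{R}(A)}$, where $A$-positive operators become genuinely positive, the argument is a faithful transcription of Kittaneh's proof of \cite[Lemma 2]{FK2} with $\left\langle\cdot,\cdot\right\rangle$ replaced by $\left\langle\cdot,\cdot\right\rangle_A$ and $*$ by $\sharp_A$.
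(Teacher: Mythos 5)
Your argument is correct and is essentially the paper's own: the paper's entire proof is the one-line instruction to repeat Kittaneh's proof of \cite[Lemma 2]{FK2} with $\left\langle \cdot,\cdot\right\rangle$ replaced by $\left\langle \cdot,\cdot\right\rangle_A$ and $C^*$ by $R^{\sharp_A}$, and what you have written is precisely that transcription carried out explicitly (reduction via Lemma \ref{lemma1} to the scalar inequality, the intertwining $h(S)R=Rh(T)$, the $\varepsilon$-regularization, and the substitution $x\mapsto \phi(T_\varepsilon)x$, $y\mapsto\psi(S_\varepsilon)y$ with $\phi(t)=f(t)/\sqrt{t}$, $\psi(t)=g(t)/\sqrt{t}$). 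The functional-calculus bookkeeping you flag (well-definedness, $A$-positivity and $A$-selfadjointness of $h(T)$, continuity in $\varepsilon$) is a genuine gap in the paper's presentation rather than in yours, and your suggestion to settle it by descending to the Hilbert space induced by $\left\langle\cdot,\cdot\right\rangle_A$ is the standard and correct way to close it.
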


 \begin{proof}
 	The proof is straightforward by replacing the inner product 
 	$\left\langle { \cdot , \cdot } \right\rangle$ by the semi-inner product $\left\langle { \cdot , \cdot } \right\rangle_A$ and setting $A=T$, $B=S$, $C=R$ and $C^*=R^{\sharp_A}$, in \cite[Lemma 2]{FK2}.
 \end{proof}

It is well known that for every selfadjoint operator $T\in \mathscr{B} \left(\mathscr{H}\right)$ the inequality 
\begin{align*}
\left|\left\langle {Tx,x} \right\rangle\right|  \le  \left\langle {\left|T\right| x,x} \right\rangle 
\end{align*}
holds for every vector $x\in \mathscr{H}$, where $\left|T\right|=\left(T^*T\right)^{1/2}$. 
To find out what are the appropriate conditions required for generalizing the previous inequality and in lighting of what was discussed previously,  neither selfadjoint operatos nor $A$-selfadjoint operator will be useful in this case. In fact, we need a new type of selfadjoint operators that covers the equality $T=T^{\sharp_A}$; which holds if and only if $T$ is $A$-selfadjoint and $\mathcal{R}\left(T\right)\subseteq \overline{\mathcal{R}\left(A\right)}$. From now on, we call this property a $\sharp_A$-selfadjointness property.   In general, for $T\in \mathscr{B}_{A }\left(\mathscr{H}\right)$, neither  $T^{\sharp_A}T$ nor $TT^{\sharp_A}$ is positive. However, these operators are $A$-selfadjoint and $A$-positive.   
Thus, in viewing of these facts, we are able to define the $A$-absolute value operator  of $T$,  such as $\left|T\right|^2_A= AT^{\sharp_A}T$, which is positive operator, and we write $\left|T\right|_A=\left(AT^{\sharp_A}T\right)^{1/2}$.  This property is called the uniqueness of the square root of $A$-positive operators.
We note that,  the $A$-absolute value operator is selfadjoint if $T$ is $A$-selfadjoint and $A$ commutes with  $T$. Moreover, we have  
\begin{align*}
\left\| {T^{\sharp_A} T} \right\|_A  = \left\| {TT^{\sharp_A} } \right\|_A  = \left\| T \right\|_A^2  = \left\| {T^{\sharp_A} } \right\|_A^2. 
\end{align*} 

The following lemma plays a main role in the presentation of the mixed Schwarz inequality for semi-Hilbertian space oprtators.
 
\begin{lemma}
 \label{lemma3}Let $A\in \mathscr{B}^+\left(\mathscr{H}\right)$. If  $T\in \mathscr{B}_{A }\left(\mathscr{H}\right)$ is   $A$-positive  such that $AT =TA$, then $
\left[ {\begin{array}{*{20}c}
	\left|T\right|_A & {T^{\sharp_A } }  \\
	T & \left|{T^{\sharp_A } }\right|_A  \\
	\end{array}} \right]$ is   positive operator in $\mathscr{B}_{\bf{A}}\left(\mathscr{H}\oplus\mathscr{H}\right)$,  where $
{\bf{A}} = \left[ {\begin{array}{*{20}c}
A & 0  \\
0 & A  \\
\end{array}} \right]\in \mathscr{B}^+\left(\mathscr{H}\oplus\mathscr{H}\right)$.
\end{lemma}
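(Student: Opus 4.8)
The plan is to use Lemma~\ref{lemma1} to reduce the claimed block positivity to a single scalar inequality, and then to recognise that inequality as the $A$-Schwarz inequality \eqref{eq2.1} for the $A$-positive operator $T$. First I would set $R=T$, so that the upper-right entry $R^{\sharp_A}=T^{\sharp_A}$ matches exactly, and read the two diagonal entries $\left|T\right|_A$ and $\left|T^{\sharp_A}\right|_A$ as the operators $T,S$ appearing in Lemma~\ref{lemma1}. Both diagonal entries are $A$-positive by construction (they are the $A$-absolute values of $T$ and $T^{\sharp_A}$), so Lemma~\ref{lemma1} applies and yields that the block is $\mathbf{A}$-positive if and only if
\begin{align*}
\left|\left\langle Tx,y\right\rangle_A\right|^2 \le \left\langle \left|T\right|_A x,x\right\rangle_A\,\left\langle \left|T^{\sharp_A}\right|_A y,y\right\rangle_A
\end{align*}
holds for all $x,y\in\mathscr{H}$.

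The heart of the argument is then to simplify the two right-hand factors using the hypotheses. Since an $A$-positive operator is $A$-selfadjoint, we have $AT=T^{*}A$, whence $T^{\sharp_A}=A^{\dag}T^{*}A=A^{\dag}AT=P_{A}T$; the commutation $AT=TA$ forces $T$ to commute with the projection $P_{A}$ onto $\overline{\mathcal{R}\left(A\right)}$, so that $T^{\sharp_A}=P_{A}T=TP_{A}$ and $\left(T^{\sharp_A}\right)^{\sharp_A}=T^{\sharp_A}$. Consequently $T^{\sharp_A}T=\left(T^{\sharp_A}\right)^{\sharp_A}T^{\sharp_A}=P_{A}T^{2}$, so the two $A$-absolute values coincide, $\left|T\right|_A=\left|T^{\sharp_A}\right|_A$. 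The crucial reduction is then the $A$-form identity $\left\langle \left|T\right|_A x,x\right\rangle_A=\left\langle Tx,x\right\rangle_A$ (and likewise for $\left|T^{\sharp_A}\right|_A$), equivalently the operator identity $A\left|T\right|_A=AT$. Granting this, the displayed inequality becomes exactly $\left|\left\langle Tx,y\right\rangle_A\right|^2\le\left\langle Tx,x\right\rangle_A\left\langle Ty,y\right\rangle_A$, which is the $A$-Schwarz inequality \eqref{eq2.1}, and the proof is complete.

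I expect the identity $A\left|T\right|_A=AT$ to be the main obstacle, and it is precisely where both hypotheses are indispensable. The point is that $A$-positivity together with $AT=TA$ makes $T$ act as a genuine (ordinary) positive selfadjoint operator on $\overline{\mathcal{R}\left(A\right)}$ and trivially on $\mathcal{N}\left(A\right)$; invoking the uniqueness of the $A$-positive square root, the root $\left(P_{A}T^{2}\right)^{1/2}$ collapses to $P_{A}T$, and left multiplication by $A$ together with $AP_{A}=A$ gives $A\left|T\right|_A=AP_{A}T=AT$. Care is needed to justify that $T\ge0$ on $\overline{\mathcal{R}\left(A\right)}$ (deducing this from $AT\ge0$, $A\ge0$ and $AT=TA$ by a commuting-operators argument on $\mathcal{R}\left(A\right)$) and to control the degenerate action on $\mathcal{N}\left(A\right)$. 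As an alternative that avoids computing $\left|T\right|_A$ explicitly, one may first note that \eqref{eq2.1} and Lemma~\ref{lemma1} already give the $\mathbf{A}$-positivity of $\left[\begin{smallmatrix}T&T^{\sharp_A}\\T&T\end{smallmatrix}\right]$, and then transfer this to the target block, since $A\left|T\right|_A=A\left|T^{\sharp_A}\right|_A=AT$ shows the two blocks share the same $\mathbf{A}$-action and hence the same $\mathbf{A}$-positivity.
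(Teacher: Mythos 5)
Your route is genuinely different from the paper's: the paper does not invoke Lemma~\ref{lemma1} at all here; instead it forms the off-diagonal operator matrix ${\bf F}$ with entries $T$ and $T^{\sharp_A}$, computes ${\bf A}{\bf F}^{\sharp_{\bf A}}{\bf F}$ to identify $\left|{\bf F}\right|_{\bf A}$ as the diagonal operator matrix with entries $\left|T\right|_A$ and $\left|T^{\sharp_A}\right|_A$, and concludes by asserting that ${\bf F}+\left|{\bf F}\right|_{\bf A}$ is ${\bf A}$-positive. Your reduction through Lemma~\ref{lemma1} to the scalar inequality $\left|\left\langle Tx,y\right\rangle_A\right|^2\le\left\langle \left|T\right|_Ax,x\right\rangle_A\left\langle \left|T^{\sharp_A}\right|_Ay,y\right\rangle_A$ is a legitimate and arguably cleaner strategy, and your preliminary computations ($T^{\sharp_A}=P_AT=TP_A$, $\left(T^{\sharp_A}\right)^{\sharp_A}=T^{\sharp_A}$, $\left|T\right|_A=\left|T^{\sharp_A}\right|_A$) are correct.

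The gap is the identity $A\left|T\right|_A=AT$, which fails under the paper's stated definition $\left|T\right|_A=\left(AT^{\sharp_A}T\right)^{1/2}$. In the commuting $A$-positive case you correctly get $T^{\sharp_A}T=P_AT^2$, but then $\left|T\right|_A=\left(AP_AT^2\right)^{1/2}=\left(AT^{2}\right)^{1/2}=A^{1/2}TP_A$, not $P_AT$: your step that ``$\left(P_AT^2\right)^{1/2}$ collapses to $P_AT$'' has silently dropped the factor $A$ under the root, i.e.\ you are computing with the alternative convention $\left|T\right|_A=\left(T^{\sharp_A}T\right)^{1/2}$. With the paper's convention one gets $A\left|T\right|_A=A^{3/2}T$, and already the scalar case $\mathscr{H}=\mathbb{C}$, $A=4$, $T=1$ gives $A\left|T\right|_A=8\ne 4=AT$. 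Worse, for $A=\frac{1}{4}$, $T=1$ the inequality your reduction requires reads $\frac{1}{16}\left|x\right|^2\left|y\right|^2\le\frac{1}{64}\left|x\right|^2\left|y\right|^2$, which is false --- so the reduction cannot be closed, and indeed the lemma as literally stated (with the paper's $\left|\cdot\right|_A$) fails on this example. Your argument does go through essentially verbatim if $\left|T\right|_A$ is read as $\left(T^{\sharp_A}T\right)^{1/2}$, the standard convention in the semi-Hilbertian literature and the only reading under which the statement is the correct analogue of $S+\left|S\right|\ge 0$; but measured against the definition the paper actually gives, the key identity, and with it the proof, breaks down.
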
  
\begin{proof}
Let ${\bf{F}}=
\left[ {\begin{array}{*{20}c}
	0 & {T^{\sharp_A } }  \\
	T & 0  \\
	\end{array}} \right] \in \mathscr{B}_{{\bf{A}} }\left(\mathscr{H}\oplus\mathscr{H}\right)$. Since $T$ is $A$-positive and $AT^{\sharp_A} =T^{\sharp_A}A$,  therefore it is easy to see that ${\bf{F}}$ is  ${\bf{A}}$-positive,  and 
${\bf{F}}^{\sharp_{\bf{A}} }=\left[ {\begin{array}{*{20}c}
	0 & T^{\sharp_A}  \\
	{T^{\sharp_A } }^{\sharp_A } & 0  \\
	\end{array}} \right]$. Moreover, we have ${\bf{A}} {\bf{F}}^{\sharp_{\bf{A}}}={\bf{F}}^{\sharp_{\bf{A}}}{\bf{A}}$, 
and
\begin{align*}
 {\bf{A}}{\bf{F}}^{\sharp_{\bf{A}}} {\bf{F}} = \left[ {\begin{array}{*{20}c}
	{ AT^{\sharp_A } T} & 0  \\
	0 &  { A\left(T^{\sharp_A }\right)^{\sharp_A }T^{\sharp_A } }  \\
	\end{array}} \right]&= \left[ {\begin{array}{*{20}c}
	{ AT^{\sharp_A } T} & 0  \\
	0 &  {  \left(T^{\sharp_A }\right)^{\sharp_A }AT^{\sharp_A } }  \\
	\end{array}} \right]
\\
&= \left[ {\begin{array}{*{20}c}
	{ AT^{\sharp_A } T} & 0  \\
	0 &  { \left(T^{\sharp_A }\right)^{\sharp_A }T^{\sharp_A }A }  \\
	\end{array}} \right]
\\
&=  \left[ {\begin{array}{*{20}c}
	{\left| { T} \right|^2_A } & 0  \\
	0 & {\left| { T^{\sharp_A } } \right|^2_A }  \\
	\end{array}} \right]= \left| {\bf{F}} \right|^2_A
\ge0,
\end{align*}
Therefore, the uniqueness of the square root of $A$-positive operators, implies that 
\begin{align*} 
 \left| {\bf{F}} \right|_A =\left({\bf{A}}{\bf{F}}^{\sharp_{\bf{A}}} {\bf{F}} \right)^{1/2}=  \left[ {\begin{array}{*{20}c}
 	{\left| { T} \right|_A } & 0  \\
 	0 & {\left| { T^{\sharp_A } } \right|_A }  \\
 	\end{array}} \right] =\left[ {\begin{array}{*{20}c}
 	{ AT^{\sharp_A } T} & 0  \\
 	0 &  { \left(T^{\sharp_A }\right)^{\sharp_A }T^{\sharp_A }A }  \\
 	\end{array}} \right].
\end{align*}
Hence ${\bf{F}} + \left| {\bf{F}} \right|_A $ is  ${\bf{A}}$-positive; i.e.,
$\left[ {\begin{array}{*{20}c}
	\left|T\right|_A & {T^{\sharp_A } }  \\
	T & \left|{T^{\sharp_A } }\right|_A  \\
	\end{array}} \right]$ is $ {\bf{A}}$-positive operator in $\mathscr{B}_{ {\bf{A}} }\left(\mathscr{H}\oplus\mathscr{H}\right)$. 
\end{proof} 
 
Now, we are ready to state the corresponding new version of the mixed Schwarz inequality for semi-Hilbertian space oprtators.
  \begin{theorem}
	\label{thm1} 	Let $A \in \mathscr{B}^+\left(\mathscr{H}\right)$	 be any positive operator. If $T \in \mathscr{B}_{A} \left(\mathscr{H}\right)$ such that $AT=TA$.  
	If $f$ and $g$ are nonnegative continuous functions on $\left[0,\infty\right)$ satisfying $f(t)g(t) =t$ $(t\ge0)$, then
	\begin{align}
	\left| {\left\langle {T   x,y} \right\rangle_A } \right|   \le 
	\left\| {f\left( {\left| T \right|_A x} \right)} \right\|_A \left\|  {g\left( {\left| {T^{\sharp_A } } \right|_A y} \right)} \right\|_A.
	\label{eq2.2}
	\end{align}
	for all vectors $x,y\in \mathscr{H}$.
\end{theorem}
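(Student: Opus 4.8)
The plan is to carry Kittaneh's derivation of \eqref{eq1.7} over to the semi-Hilbertian setting, using Lemmas \ref{lemma1}--\ref{lemma3} as scaffolding. First I would appeal to Lemma \ref{lemma3} to get that the block operator $\left[\begin{smallmatrix}\left|T\right|_A & T^{\sharp_A}\\ T & \left|T^{\sharp_A}\right|_A\end{smallmatrix}\right]$ is $\mathbf{A}$-positive on $\mathscr{H}\oplus\mathscr{H}$ (this is exactly its conclusion, the $\sharp_{\mathbf{A}}$-selfadjointness of $\left[\begin{smallmatrix}0 & T^{\sharp_A}\\ T & 0\end{smallmatrix}\right]$ being what drives it). The strategy is then to upgrade the two diagonal entries to $f^2(\left|T\right|_A)$ and $g^2(\left|T^{\sharp_A}\right|_A)$ without destroying $\mathbf{A}$-positivity, and to read off the scalar inequality from the upgraded block via Lemma \ref{lemma1}.

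The upgrade is precisely Lemma \ref{lemma2}, applied under the identification $T\mapsto\left|T\right|_A$, $S\mapsto\left|T^{\sharp_A}\right|_A$, $R\mapsto T$; it delivers the $\mathbf{A}$-positivity of $\left[\begin{smallmatrix}f^2(\left|T\right|_A) & T^{\sharp_A}\\ T & g^2(\left|T^{\sharp_A}\right|_A)\end{smallmatrix}\right]$ once its two hypotheses are checked. That the diagonal entries are $A$-positive is routine: under $AT=TA$ the operator $A$ commutes with $\left|T\right|_A$ and $\left|T^{\sharp_A}\right|_A$, and a product of commuting positive operators is positive. The genuinely substantial hypothesis, and what I expect to be the main obstacle, is the commutation relation $SR=RT$, i.e. $\left|T^{\sharp_A}\right|_A\,T=T\,\left|T\right|_A$. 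To establish it I would first simplify the $A$-absolute values: from the defining identity $AT^{\sharp_A}=T^{*}A$ and the hypothesis $AT=TA$ (which force $A$ to commute also with $T^{*}$ and with $T^{\sharp_A}$) one computes $\left|T\right|_A^{2}=AT^{\sharp_A}T=T^{*}AT=A\,T^{*}T$ and, similarly, $\left|T^{\sharp_A}\right|_A^{2}=A\,TT^{*}$, whence $\left|T\right|_A=A^{1/2}\left|T\right|$ and $\left|T^{\sharp_A}\right|_A=A^{1/2}\left|T^{*}\right|$. Substituting these and using that $A^{1/2}$ commutes with $T$, the required identity collapses to the classical polar-decomposition relation $\left|T^{*}\right|T=T\left|T\right|$, valid for every bounded operator.

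Granting the upgraded block its $\mathbf{A}$-positivity, Lemma \ref{lemma1} yields at once
\[
\left|\left\langle Tx,y\right\rangle_A\right|^{2}\le\left\langle f^{2}(\left|T\right|_A)x,x\right\rangle_A\,\left\langle g^{2}(\left|T^{\sharp_A}\right|_A)y,y\right\rangle_A
\]
for all $x,y\in\mathscr{H}$. The last step is to recognize each factor as a squared $A$-seminorm. Because $\left|T\right|_A=(T^{*}AT)^{1/2}$ is an honest positive selfadjoint operator, $f(\left|T\right|_A)$ is selfadjoint, and since $A$ commutes with $\left|T\right|_A$ it commutes with $f(\left|T\right|_A)$ by the functional calculus; hence $\left\langle f^{2}(\left|T\right|_A)x,x\right\rangle_A=\left\langle Af(\left|T\right|_A)x,f(\left|T\right|_A)x\right\rangle=\left\|f(\left|T\right|_A)x\right\|_A^{2}$, and likewise for the $g$-factor. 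Inserting these and taking square roots gives \eqref{eq2.2}. Thus the whole argument rests on two commutation facts: the operator identity $\left|T^{\sharp_A}\right|_A T=T\left|T\right|_A$ and the commuting of $A$ with $f(\left|T\right|_A)$; both descend from $AT=TA$ together with the identity $AT^{\sharp_A}=T^{*}A$.
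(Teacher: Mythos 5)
Your proposal is correct and follows essentially the same route as the paper: Lemma \ref{lemma3} gives the $\mathbf{A}$-positivity of the unweighted block, the commutation identity $\left|T^{\sharp_A}\right|_A T=T\left|T\right|_A$ feeds Lemma \ref{lemma2}, and Lemma \ref{lemma1} extracts the scalar inequality. The only minor (and arguably cleaner) divergence is that you obtain the commutation identity by factoring $\left|T\right|_A=A^{1/2}\left|T\right|$ and $\left|T^{\sharp_A}\right|_A=A^{1/2}\left|T^{*}\right|$ and invoking the classical relation $\left|T^{*}\right|T=T\left|T\right|$, whereas the paper computes $\left|T^{\sharp_A}\right|_A^{2}T=T\left|T\right|_A^{2}$ directly from $T^{\sharp_A}=A^{\dag}T^{*}A$ and passes (implicitly) to square roots; you are also more explicit than the paper about the final step of rewriting $\left\langle f^{2}\left(\left|T\right|_A\right)x,x\right\rangle_A$ as $\left\|f\left(\left|T\right|_A\right)x\right\|_A^{2}$.
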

\begin{proof}
Since $T^{\sharp_A}=A^{\dag}T^*A$, then we observe that 
\begin{align*}
\left|T^{\sharp_A}\right|_A^2T = \left(T^{\sharp_A }\right)^{\sharp_A }T^{\sharp_A }AT &= \left(A^{\dag}T^*A\right)^{\sharp_A }A^{\dag}T^*AAT
\nonumber\\
&=A(T^*)^{\sharp_A}A^{\dag}A^{\dag}T^*AAT
\nonumber\\
&=A A^{\dag}TAA^{\dag}A^{\dag}T^*AAT
\nonumber\\
&=TT^*AT \qquad\qquad (\text{since}\,\, T^*A=AT^*)
\\
&=TAT^{\sharp_A}T
\\
&=T\left|T\right|_A^2,
\end{align*}
it follows that $T\left|T\right|_A=\left|T^{\sharp_A}\right|_AT$. Thus, by Lemmas \ref{lemma2} and \ref{lemma3}, we have $
\left[ {\begin{array}{*{20}c}	{f^2 \left( \left|T\right|_A \right)} & {T^{\sharp_A } }  \\
T & {g^2 \left( \left|T^{\sharp_A}\right|_A \right)}  \\
\end{array}} \right]$  is  ${{\bf{A}}}$-positive operator in $\mathscr{B}_{{\bf{A}}}\left(\mathscr{H}\oplus\mathscr{H}\right)$.
The required inequality now follows from Lemma \ref{lemma1}.
\end{proof}

\begin{remark}
Under the assumptions of Theorem \ref{thm1}. Choosing $f(t)=t^\alpha$ and $g(t)=t^{1-\alpha}$, we get	
\begin{align}
 \label{eq2.3}\left| {\left\langle {T   x,y} \right\rangle_A } \right|^2   \le \left\langle {\left| T \right|^{2\alpha}_A x,x} \right\rangle_A \left\langle {\left| {T^{\sharp_A } } \right|^{2\left(1-\alpha\right)}_A y,y} \right\rangle_A, \qquad  0\le \alpha \le 1
\end{align}
  for all vectors $x,y\in \mathscr{H}$. Setting $\alpha=\frac{1}{2}$, we get
\begin{align}
 \label{eq2.4}\left| {\left\langle {T   x,y} \right\rangle_A } \right|    \le \left\langle {\left| T \right|_A x,x} \right\rangle_A^{\frac{1}{2}} \left\langle {\left| {T^{\sharp_A } } \right|_A y,y} \right\rangle_A^{\frac{1}{2}}
\end{align}
  for all vectors $x,y\in \mathscr{H}$.
\end{remark}

  \begin{theorem}
	\label{thm2} 	Let $A \in \mathscr{B}^+\left(\mathscr{H}\right)$	 be any positive operator. If $T \in \mathscr{B}_{A^{1/2}} \left(\mathscr{H}\right)$ such that $A$ commutes with $T$ and $\left|T\right|S=S^{\sharp_A}\left|T\right|$.  
	If $f$ and $g$ are nonnegative continuous functions on $\left[0,\infty\right)$ satisfying $f(t)g(t) =t$ $(t\ge0)$, then
	\begin{align*}
	\left| {\left\langle {T   x,y} \right\rangle_A } \right|   \le  r_A\left(T\right)
	\left\| {f\left( {\left| T \right|_A x} \right)} \right\|_A \left\|  {g\left( {\left| {T^{\sharp_A } } \right|_A y} \right)} \right\|_A.
	\end{align*}
	for all vectors $x,y\in \mathscr{H}$.
\end{theorem}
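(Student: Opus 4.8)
The statement must be read as the semi-Hilbertian analogue of Kittaneh's inequality \eqref{eq1.7}: the product $TS$ belongs on the left, the scalar factor is $r_A(S)$, and $S$ is the operator governed by the intertwining hypothesis $\left|T\right|_AS=S^{\sharp_A}\left|T\right|_A$ (the correspondence with \eqref{eq1.7} being $A\mapsto T$, $\left|A\right|\mapsto\left|T\right|_A$, $A^*\mapsto T^{\sharp_A}$, $B\mapsto S$, $r(B)\mapsto r_A(S)$). With this reading the plan is to reproduce the architecture of Theorem \ref{thm1}, the one genuinely new ingredient being a Halmos--Reid mechanism that converts the $A$-operator seminorm $\left\|S\right\|_A$ into the $A$-spectral radius $r_A(S)$.

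First I would settle the exponent-$\tfrac12$ case. Inserting $Sx$ in place of $x$ in \eqref{eq2.4} gives
\begin{align*}
\left|\left\langle TSx,y\right\rangle_A\right|\le \left\langle \left|T\right|_ASx,Sx\right\rangle_A^{1/2}\left\langle \left|T^{\sharp_A}\right|_Ay,y\right\rangle_A^{1/2},
\end{align*}
so the whole point reduces to the estimate $\left\langle \left|T\right|_ASx,Sx\right\rangle_A\le r_A(S)^2\left\langle \left|T\right|_Ax,x\right\rangle_A$. Set $P:=\left|T\right|_A$, which is $A$-selfadjoint and $A$-positive; the hypothesis makes $PS^{m}$ $\sharp_A$-selfadjoint for every $m$, since $PS=S^{\sharp_A}P$ forces $PS^{m}=(S^{\sharp_A})^{m}P$. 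Writing $c(m)=\left\langle PS^{m}x,x\right\rangle_A$, the $\sharp_A$-selfadjointness yields $\left\langle PS^{m}x,S^{m}x\right\rangle_A=c(2m)$, and Cauchy--Schwarz for the positive semidefinite form $\left\langle P\,\cdot\,,\cdot\right\rangle_A$ gives $c(m)\le c(2m)^{1/2}c(0)^{1/2}$ with $c(2^{k})\ge0$. Iterating this doubling inequality produces $c(2)\le c(2^{n})^{2^{-(n-1)}}c(0)^{1-2^{-(n-1)}}$; since $c(2^{n})^{2^{-n}}\le\bigl(\left\|P\right\|_A\left\|x\right\|_A^2\bigr)^{2^{-n}}\left\|S^{2^{n}}\right\|_A^{2^{-n}}$ and $\left\|S^{2^{n}}\right\|_A^{2^{-n}}\to r_A(S)$ by the definition \eqref{eq1.2}, letting $n\to\infty$ collapses the bound to $c(2)\le r_A(S)^2c(0)$. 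This is exactly the $A$-version of the Halmos strengthening of Reid's inequality \eqref{eq1.5}, and it proves the theorem for $f(t)=g(t)=\sqrt t$ (the case $r_A(S)=0$ being already contained here).

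To reach arbitrary $f,g$ I would pass to the $2\times2$ block picture, as in Theorem \ref{thm1}. By Lemma \ref{lemma1} the $\tfrac12$-case just proved is equivalent to the $\mathbf{A}$-positivity of $\left[\begin{smallmatrix} r_A(S)\left|T\right|_A & (TS)^{\sharp_A}\\ TS & r_A(S)\left|T^{\sharp_A}\right|_A\end{smallmatrix}\right]$; then Lemma \ref{lemma2} should upgrade the diagonal blocks to $r_A(S)f^2(\left|T\right|_A)$ and $r_A(S)g^2(\left|T^{\sharp_A}\right|_A)$, and a final pass through Lemma \ref{lemma1} delivers the desired inequality. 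I expect the main obstacle to sit precisely at the invocation of Lemma \ref{lemma2}: its hypothesis requires the commutation $\left|T^{\sharp_A}\right|_A(TS)=(TS)\left|T\right|_A$ for the off-diagonal $R=TS$, and combining the polar-type identity $T\left|T\right|_A=\left|T^{\sharp_A}\right|_AT$ (proved inside Theorem \ref{thm1}) with the intertwining hypothesis reduces this to $TS^{\sharp_A}\left|T\right|_A=TS\left|T\right|_A$, which is not automatic. Discharging this is where the hypotheses on $S$ must be used most delicately — either by strengthening the intertwining assumption, or by re-deriving the block positivity for general $f,g$ directly from the $\tfrac12$-case by polarization rather than through Lemma \ref{lemma2}. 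Throughout, the degeneracy of the seminorm (the projections $P_A$ entering $(S^{\sharp_A})^{\sharp_A}=P_ASP_A$) will also need to be tracked carefully.
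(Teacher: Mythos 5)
The first thing to note is that the paper does not actually prove this theorem: its entire ``proof'' is the single sentence that the argument ``goes likewise the proof of \cite[Theorem 5]{FK2}'', so there is no detailed argument to measure you against. Your reading of the garbled statement --- the product $TS$ on the left, the scalar $r_A(S)$, and the hypothesis $\left|T\right|_AS=S^{\sharp_A}\left|T\right|_A$ --- is certainly the intended one. Your treatment of the case $f(t)=g(t)=\sqrt t$ is correct and complete at the paper's own level of rigor: the reduction via \eqref{eq2.4} to $\left\langle \left|T\right|_ASx,Sx\right\rangle_A\le r_A(S)^2\left\langle\left|T\right|_Ax,x\right\rangle_A$, the observation that $PS^{m}=(S^{\sharp_A})^{m}P$ gives $c(2m)=\left\langle PS^{m}x,S^{m}x\right\rangle_A\ge0$, the doubling bound $|c(m)|\le c(2m)^{1/2}c(0)^{1/2}$ from Cauchy--Schwarz for the form $\left\langle P\,\cdot\,,\cdot\right\rangle_A$ (positive because $A$ commutes with $T$, hence with $P=\left|T\right|_A$), and the limit using \eqref{eq1.2} are all sound. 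This is a genuine $A$-analogue of the Halmos--Reid inequality and is already more than the paper supplies.

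The gap is in the second half, and you have diagnosed it correctly yourself: the block-matrix route does not close. Lemma \ref{lemma2} applied with off-diagonal block $R=TS$ requires $\left|T^{\sharp_A}\right|_A(TS)=(TS)\left|T\right|_A$; combining $\left|T^{\sharp_A}\right|_AT=T\left|T\right|_A$ (from the proof of Theorem \ref{thm1}) with the intertwining hypothesis turns this into $T\left(S^{\sharp_A}-S\right)\left|T\right|_A=0$, which the hypotheses do not give. The obstruction is already present in the classical Hilbert-space case: $XS=S^{*}X$ does not imply $X^{2}S=S^{*}X^{2}$ (take $X=\diag(1,2)$ and $S=\left[\begin{smallmatrix}0&2\\1&0\end{smallmatrix}\right]$), and the same example kills the natural alternative of upgrading the intertwining relation from $\left|T\right|_A$ to $f(\left|T\right|_A)$ and $g(\left|T\right|_A)$. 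So your proposal establishes the theorem only for $f=g=\sqrt t$; for general $f,g$ one must follow Kittaneh's actual assembly, which performs the Reid-type estimate on the already-positive block matrix $\left[\begin{smallmatrix}f^{2}(\left|T\right|_A)&T^{\sharp_A}\\ T&g^{2}(\left|T^{\sharp_A}\right|_A)\end{smallmatrix}\right]$ rather than re-entering Lemma \ref{lemma2} with $R=TS$, and transplanting that step to the semi-Hilbertian setting is exactly the work the paper skips. In short: your write-up contains the one genuinely new ingredient (the $A$-Halmos--Reid mechanism) and an accurate account of what remains, but the final step for general $f,g$ is left open --- as, in truth, it is in the paper.
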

\begin{proof}
The proof goes likewise the proof \cite[Theorem 5]{FK2} by rewriting the proof for the sem-inner produnct $ \left\langle {\cdot,\cdot} \right\rangle _A$, taking into account Theorem \ref{thm1}.
\end{proof} 

\begin{lemma}
\label{lemma4}	Let $f:$ be a non-negative convex function defined on a real interval $I$. Then for every  positive operator $T\in \mathscr{B}_{A}\left(\mathscr{H}\right)$ whose $\spe_A\left(T\right)\subset I$, we have
	\begin{align}
 \label{eq2.5}	f\left( { \left\langle {Tx,x} \right\rangle _A} \right) \le 
	\left\langle {f\left( T \right)x,x} \right\rangle _A 
	\end{align}
	for all  vector  $x \in  \mathscr{H} $. If $f$ is concave then the inequality is reversed.
\end{lemma}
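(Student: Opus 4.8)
The plan is to transplant the classical supporting-line (subgradient) proof of the Jensen operator inequality to the semi-inner product $\langle\cdot,\cdot\rangle_A$. I will carry it out for an $A$-unit vector, $\|x\|_A=1$ (the homogeneity of the two sides forces this normalization: replacing $x$ by $cx$ multiplies the left-hand side of \eqref{eq2.5} by $c^4$ but the right-hand side only by $c^2$, so the stated inequality can hold for every $x$ only after fixing $\|x\|_A=1$). Write $\mu:=\langle Tx,x\rangle_A$. First I would observe that $\mu\in I$: since $T$ is $A$-positive, $AT=T^*A$ is a genuinely positive (hence selfadjoint) operator, so $T$ is $A$-selfadjoint, its $A$-spectrum is real, and the $A$-numerical value $\langle Tx,x\rangle_A$ with $\|x\|_A=1$ lies in the convex hull of $\spe_A(T)\subset I$; as $I$ is an interval, $\mu\in I$.

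Next I would use convexity at the point $\mu$. Because $f$ is convex on $I$, it admits a supporting line there: there is a real number $\beta$ (a subgradient of $f$ at $\mu$) with $f(t)\ge f(\mu)+\beta\,(t-\mu)$ for all $t\in I$. Setting $g(t):=f(t)-f(\mu)-\beta\,(t-\mu)$, this says $g\ge0$ throughout $I$, and in particular on $\spe_A(T)$.

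The final step transports the scalar inequality to operators via the functional calculus of the $A$-selfadjoint operator $T$. Since $g\ge0$ on $\spe_A(T)$, the operator
\begin{align*}
g(T)=f(T)-f(\mu)\,1_{\mathscr{H}}-\beta\left(T-\mu\,1_{\mathscr{H}}\right)
\end{align*}
is $A$-positive, so $\langle g(T)x,x\rangle_A\ge0$. Expanding and inserting $\langle x,x\rangle_A=\|x\|_A^2=1$ together with $\langle Tx,x\rangle_A=\mu$ gives
\begin{align*}
0\le\langle f(T)x,x\rangle_A-f(\mu)-\beta\left(\mu-\mu\right)=\langle f(T)x,x\rangle_A-f\!\left(\langle Tx,x\rangle_A\right),
\end{align*}
which is precisely \eqref{eq2.5}. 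The concave case follows at once by applying the convex case to $-f$.

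The main obstacle is justifying the passage $g\ge0\text{ on }\spe_A(T)\Rightarrow g(T)\text{ is }A\text{-positive}$, i.e. that the continuous functional calculus for $A$-selfadjoint operators is compatible with the order induced by $A$. This is exactly where the hypotheses that $T$ is positive in $\mathscr{B}_A(\mathscr{H})$ and $\spe_A(T)\subset I$ are used: $A$-positivity makes $AT\ge0$ selfadjoint, hence $T$ is $A$-selfadjoint with a real spectral resolution relative to $A$, and along that resolution both $\langle g(T)x,x\rangle_A$ and $\mu=\langle Tx,x\rangle_A$ become integrals of $g$ and of $\lambda$ against the \emph{same} nonnegative measure $d\langle E_\lambda x,x\rangle_A$ of total mass $\|x\|_A^2=1$ supported on $\spe_A(T)$; the former is manifestly $\ge0$ when $g\ge0$ there, and the latter exhibits $\mu$ as a weighted average forcing $\mu\in I$. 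Establishing this compatibility (and that $f(T),g(T)$ are genuinely defined through the $A$-functional calculus) is the only nonroutine point; once it is in hand, the subgradient step and the final expansion are purely formal.
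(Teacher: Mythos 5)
Your proposal follows essentially the same route as the paper's own proof: a supporting line $f(t)\ge f(s)+\lambda(t-s)$ at $s=\left\langle Tx,x\right\rangle_A$, transported to the operator level by the functional calculus and then evaluated in the semi-inner product $\left\langle\cdot,\cdot\right\rangle_A$. Your version is in fact slightly more careful than the paper's, since you make explicit the normalization $\left\|x\right\|_A=1$ (which the paper's statement omits but its computation tacitly uses when it replaces $\left\langle f(s)1_{\mathscr{H}}x,x\right\rangle_A$ by $f(s)$) and you verify that $\left\langle Tx,x\right\rangle_A\in I$ before invoking the subgradient.
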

\begin{proof}
	Since $f$ is convex then for any $x,t\in I$ there is a $\lambda \in \mathbb{R}$ such that
	\begin{align*}
	f\left( t \right) \ge f\left( s \right) + \lambda \left( {t - s} \right)
	\end{align*}
	Since $T$ is  positive, then $T$ is  selfadjoint operator. Using the functional calculus for sesquilinear form, thus   we have 
	\begin{align*}
	f\left( T \right) \ge f\left( s \right) + \lambda \left( {T - s} \right)
	\end{align*}
	which is equivalent to write
	\begin{align*}
	 \left\langle {f\left( T \right)x,x} \right\rangle _A 
	  \ge f\left( s \right) 1_{\mathscr{H}}+ \lambda   \left\langle { \left( {T - s1_{\mathscr{H}}} \right)x,x  } \right\rangle _A 
	\end{align*}
	for all vectors $x\in \mathscr{H}$. Setting $s=\left\langle {Tx,x} \right\rangle _A $, we have
	\begin{align*}
		\left\langle {f\left( T \right)x,x} \right\rangle _A 
	 &\ge	f\left( { \left\langle {Tx,x} \right\rangle _A} \right)  1_{\mathscr{H}}+ \lambda  	\left\langle  \left( {T -  \left\langle {Tx,x} \right\rangle _A  } \right)x,x \right\rangle _A 
 	\\
	&=f\left( { \left\langle {Tx,x} \right\rangle _A} \right)  + \lambda \left[\left\langle {Tx,x} \right\rangle _A -\left\langle {Tx,x} \right\rangle _A \right]
	\\
	&=f\left( { \left\langle {Tx,x} \right\rangle _A} \right)
	\end{align*}
	for all vectors $x\in \mathscr{H}$, and this proves the required result.
\end{proof}

The following version of H\"{o}lder--McCarty inequality holds  for semi-Hilbertian operators.
\begin{corollary} 
	Let $T\in \mathscr{B}_{A}\left(\mathscr{H}\right)$, such that $T$ is  positive  and $x\in \mathscr{H}$ be an $A$-unit vector. Then, 
	\begin{align}
	\label{eq2.6}  \left\langle {Tx,x} \right\rangle^r _A  \le  \left\langle {T^rx,x} \right\rangle _A,\qquad   r\ge1 
	\end{align}
	and
	\begin{align}
	\label{eq2.7} \left\langle {T^rx,x} \right\rangle _A \le   \left\langle {Tx,x} \right\rangle^r _A ,\qquad   0\le r\le1 
	\end{align}
\end{corollary}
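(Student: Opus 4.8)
The plan is to obtain both inequalities as immediate specializations of Lemma \ref{lemma4}, applied to the power function $f(t)=t^r$ on the interval $I=\left[0,\infty\right)$. Since $T$ is a positive operator it is selfadjoint with spectrum contained in $\left[0,\infty\right)$, so $\spe_A\left(T\right)\subset I$ and the standing hypotheses of Lemma \ref{lemma4} are satisfied; moreover the continuous functional calculus identifies $f\left(T\right)=T^r$ for each $r\ge 0$. The only analytic input needed is the elementary fact that $t\mapsto t^r$ is nonnegative and convex on $\left[0,\infty\right)$ when $r\ge 1$, while it is nonnegative and concave on $\left[0,\infty\right)$ when $0\le r\le 1$. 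Feeding each of these two regimes into \eqref{eq2.5} yields the two claimed inequalities.

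For \eqref{eq2.6} I would fix $r\ge 1$, so that $f\left(t\right)=t^r$ is a nonnegative convex function on $I$. Applying the convex case of Lemma \ref{lemma4} to the $A$-unit vector $x$ gives
\begin{align*}
\left\langle {Tx,x} \right\rangle_A^r = f\left(\left\langle {Tx,x} \right\rangle_A\right) \le \left\langle {f\left(T\right)x,x} \right\rangle_A = \left\langle {T^r x,x} \right\rangle_A,
\end{align*}
which is precisely \eqref{eq2.6}. For \eqref{eq2.7} I would instead fix $0\le r\le 1$, so that $f\left(t\right)=t^r$ is nonnegative and concave on $I$; the concave clause of Lemma \ref{lemma4} reverses the inequality and produces
\begin{align*}
\left\langle {T^r x,x} \right\rangle_A = \left\langle {f\left(T\right)x,x} \right\rangle_A \le f\left(\left\langle {Tx,x} \right\rangle_A\right) = \left\langle {Tx,x} \right\rangle_A^r,
\end{align*}
which is \eqref{eq2.7}.

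Since essentially all the work is carried by Lemma \ref{lemma4}, I do not anticipate a genuine obstacle. The only points that warrant a line of care are, first, confirming the spectral hypothesis $\spe_A\left(T\right)\subset\left[0,\infty\right)$ for a positive $T$, so that $t^r$ (for non-integer $r$) is actually defined and possesses the asserted convexity or concavity on the relevant interval; and second, recording explicitly that $x$ being an $A$-unit vector is exactly what is used in the proof of Lemma \ref{lemma4}, where the substitution $s=\left\langle {Tx,x} \right\rangle_A$ collapses the affine remainder term to zero because $\left\langle {x,x} \right\rangle_A=\left\|x\right\|_A^2=1$. Both verifications are routine, so the corollary follows at once from Lemma \ref{lemma4}.
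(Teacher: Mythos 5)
Your proposal is correct and follows essentially the same route as the paper: both apply Lemma \ref{lemma4} with $f(t)=t^r$, using the convex case for $r\ge 1$ and the reversed (concave) case for $0\le r\le 1$. Your additional remarks about the spectral hypothesis and the role of the $A$-unit vector condition are sound elaborations of the same argument.
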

\begin{proof}
	Let $f\left(t\right)=t^r$ $(r\ge1)$ in Lemma \ref{lemma4}. For the second inequality \eqref{eq2.6}, apply the reversed version of \eqref{eq2.5} for the function  $f\left(t\right)=t^r$ $(0\le r\le1)$.
\end{proof}
By noting that, for $A$-positive operator $T$ we have
\begin{align*}
\left\langle {Tx,x} \right\rangle^r _A = \left\langle {ATx,x} \right\rangle^r \le  \left\langle {(AT)^{r}x,x} \right\rangle =\left\langle {AT(AT)^{r-1}x,x} \right\rangle=\left\langle {T(AT)^{r-1}x,x} \right\rangle_A, \qquad \forall r\ge1
\end{align*} 
which implies that the inequality 
\begin{align*}
\left\langle {Tx,x} \right\rangle^r _A  \le  \left\langle {T(AT)^{r-1}x,x} \right\rangle _A,\qquad   r\ge1 
\end{align*}
holds if and only if $AT$ is positive, i.e., $T$ is $A$-positive; which indeed, the corresponding version of H\"{o}lder--McCarty inequality for $A$-positive operators act  on semi-Hilbertian spaces.
Similarly, we have
\begin{align*}
\left\langle {Tx,x} \right\rangle^r _A  \ge  \left\langle {T(AT)^{r-1}x,x} \right\rangle _A,\qquad   0\le r\le1 
\end{align*}
hold for $A$-positive operator $T$.
\begin{corollary} 
\label{cor2}	Let $T\in \mathscr{B}_{A }\left(\mathscr{H}\right)$, such that $T$ is $\sharp_A$-selfadjoint and $x\in \mathscr{H}$ be an $A$-unit vector. Then, 
	\begin{align}
	\label{eq2.8}  \left|\left\langle {Tx,x} \right\rangle_A\right|  \le  \left\langle {\left|T\right|_A x,x} \right\rangle _A 
	\end{align}

\end{corollary}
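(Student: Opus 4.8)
The plan is to read off \eqref{eq2.8} as the diagonal case $y=x$ of the symmetric mixed Schwarz inequality \eqref{eq2.4}, which I may invoke since it was derived above from Theorem \ref{thm1}. The only extra ingredient beyond \eqref{eq2.4} is that $\sharp_A$-selfadjointness collapses the two distinct factors on its right-hand side into a single one.

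First I would record the identification $\left|T^{\sharp_A}\right|_A=\left|T\right|_A$. By definition $\sharp_A$-selfadjointness means $T=T^{\sharp_A}$, a property that also carries $\mathcal{R}(T)\subseteq\overline{\mathcal{R}(A)}$, i.e. $P_AT=T$. Substituting $T^{\sharp_A}=T$ and the double-adjoint formula $\left(T^{\sharp_A}\right)^{\sharp_A}=P_ATP_A$ into the definition $\left|T^{\sharp_A}\right|_A^2=A\left(T^{\sharp_A}\right)^{\sharp_A}T^{\sharp_A}$, and simplifying with $AP_A=A$ and $P_AT=T$, I obtain $\left|T^{\sharp_A}\right|_A^2=ATT=AT^{\sharp_A}T=\left|T\right|_A^2$; uniqueness of the square root of an $A$-positive operator then gives $\left|T^{\sharp_A}\right|_A=\left|T\right|_A$.

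With this in hand I would set $y=x$ in \eqref{eq2.4} to get
\begin{align*}
\left|\left\langle Tx,x\right\rangle_A\right|\le\left\langle\left|T\right|_Ax,x\right\rangle_A^{1/2}\left\langle\left|T^{\sharp_A}\right|_Ax,x\right\rangle_A^{1/2},
\end{align*}
and the identification of the previous step turns the right-hand side into $\left\langle\left|T\right|_Ax,x\right\rangle_A^{1/2}\left\langle\left|T\right|_Ax,x\right\rangle_A^{1/2}=\left\langle\left|T\right|_Ax,x\right\rangle_A$, which is precisely \eqref{eq2.8}. Since both sides are homogeneous of degree two in $x$, the normalization $\left\|x\right\|_A=1$ is inessential and could in fact be dropped.

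The point I expect to be delicate is the legitimacy of invoking \eqref{eq2.4}, not the algebra itself. Inequality \eqref{eq2.4} descends from Theorem \ref{thm1}, whose hypothesis is the commutativity $AT=TA$, whereas the corollary is phrased only through $\sharp_A$-selfadjointness, which encodes $A$-selfadjointness $AT=T^*A$ together with $\mathcal{R}(T)\subseteq\overline{\mathcal{R}(A)}$ but not $AT=TA$ outright. I would therefore read $AT=TA$ as a standing assumption carried from Theorem \ref{thm1} through this section, and check that it is exactly this commutativity that licenses the use of \eqref{eq2.4}. Establishing that this hypothesis is genuinely in force under the intended reading of the statement is the only substantive gap; the identification of the absolute values and the specialization $y=x$ are then purely formal.
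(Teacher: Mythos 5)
Your proposal is correct and is essentially the paper's own argument: the paper likewise just sets $y=x$ in the mixed Schwarz inequality and uses $T=T^{\sharp_A}$ to collapse the two factors. Your extra verification that $\left|T^{\sharp_A}\right|_A=\left|T\right|_A$ via $\left(T^{\sharp_A}\right)^{\sharp_A}=P_ATP_A$, and your observation that the hypothesis $AT=TA$ from Theorem \ref{thm1} is tacitly needed but not stated in the corollary, are both points the paper glosses over.
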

\begin{proof}
Since $T=T^{\sharp}$, by letting $y=x$ in \eqref{eq2.2} with $S=I$ we have the inequality
\eqref{eq2.8}.
\end{proof}

In fact, one may establish a generalization of Theorem \ref{thm1}  to several operators, by letting $T_i, \in
\mathscr{B}_A\left( \mathscr{H}\right)$ $(i=1,\cdots,n)$ such that
\begin{align*}
 AT_i  =  T_iA   \qquad {\rm{and}}   
\end{align*}
If $f,g$ are as above, proceeding as in the proof of Theorem \ref{thm1},
then we have
\begin{align}
 \left| {\left\langle {\left(\sum_{i=1}^{n} T_i \right)x, u} \right\rangle_A } \right|  
 &\le \sum_{i=1}^n{  \left\| {f\left( {\left| T_i \right|_A} \right)x} \right\|_A\left\| {g\left( {\left| {T_i^{\sharp_A} } \right|_A} \right)u} \right\|_A} \label{eq2.9}\\
&\le  \left(\sum_{i=1}^n \left\| {f\left( {\left| T_i
		\right|_A} \right)x} \right\|_A^p \right)^{1/p}  \left(\sum_{i=1}^n
\left\| {g\left( {\left| {T_i^{\sharp_A} } \right|_A} \right)u} \right\|_A^q
\right)^{1/q}.\nonumber
\end{align}
For all $x,u\in \mathscr{H}$, which follows by the H\"{o}lder inequality, where $p,q$ are conjugate exponents, i.e.,  $p,q>1$ with $\frac{1}{p}+\frac{1}{q}=1$.\\

Thus, one may has the following norm inequality
\begin{align}
\left\| \sum_{i=1}^{n} T_i \right\|_A \le  
\left(\sum_{i=1}^n \left\| {f\left( {\left| T_i \right|_A} \right) }
\right\|_A^p \right)^{1/p}  \left(\sum_{i=1}^n \left\| {g\left(
	{\left| {T_i^{\sharp_A} } \right|_A} \right)} \right\|_A^q
\right)^{1/q}.\label{eq2.10}
\end{align}
For instance,  consider $f(t)=t^{\alpha}$ and $g(t)=t^{1-\alpha}$,
one has from  \eqref{eq2.9} that
\begin{align*}
\left\| \sum_{i=1}^{n} T_i  \right\|_A  \le   
\left(\sum_{i=1}^n \left\| {\left| T_i \right|_A^{\alpha}  }
\right\|_A^p \right)^{1/p}  \left(\sum_{i=1}^n \left\| { \left|
	{T_i^{\sharp_A} } \right|_A^{1-\alpha} } \right\|_A^q \right)^{1/q}.
\end{align*}

\begin{remark}
In particular case  for $n=1$ (setting $T_1=S$), then we have
\begin{align*}
\left\|  S   \right\|_A  \le   
\left\| {\left| S \right|_A^{\alpha}  } \right\|_A 
  \left\| { \left| {S^{\sharp_A} } \right|_A^{1-\alpha} }
\right\|_A, \qquad 0\le \alpha\le 1.
\end{align*}
 Also, for $n=2$ and $p=q=2$ we get
\begin{align*}
\left\|  T_1+T_2  \right\|_A  \le  \left(\left\| {\left| T_1 \right|_A^{\alpha}  } \right\|_A^2+\left\| {\left| T_2 \right|_A^{\alpha}  } \right\|_A^2 \right)^{1/2}
\,\,  \left(\left\| {\left| T^{\sharp_A}_1 \right|_A^{1-\alpha}  } \right\|_A^2+\left\| {\left| T^{\sharp_A}_2 \right|_A^{1-\alpha}  } \right\|_A^2 \right)^{1/2}
\end{align*}
for all $\alpha \in \left[0,1\right]$. 
\end{remark}

The   next result provides a new extension of
the mixed Schwarz inequality \eqref{eq2.2} using $A$-Cartesian decomposition. 
\begin{theorem}
	\label{thm3}Let  $T\in \mathscr{B}_A\left( \mathscr{H}\right)$ such that $AT=TA$,
	with the $A$-Cartesian decomposition    $T=P+iQ$.  If $f$ and $g$ are as in
	Theorem \ref{thm1}, then
	\begin{align}
	\left| {\left\langle {T x,y} \right\rangle_A } \right| \le \left\| {f\left( \left|P\right|_A
		\right)x} \right\|_A\left\| {g\left( \left|P^{\sharp_A}\right|_A \right)y}
	\right\|_A + \left\| {f\left( \left|Q\right|_A \right)x}
	\right\|_A\left\| {g\left( \left|Q^{\sharp_A}\right|_A \right)y} \right\|_A  \label{eq2.11}
	\end{align}
	for all $x,y\in \mathscr{H}$.
\end{theorem}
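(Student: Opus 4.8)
The plan is to reduce \eqref{eq2.11} to two applications of Theorem~\ref{thm1} by splitting $T$ through its $A$-Cartesian decomposition and then invoking the triangle inequality. Write $P=\re_A\left(T\right)=\frac{T+T^{\sharp_A}}{2}$ and $Q=\im_A\left(T\right)=\frac{T-T^{\sharp_A}}{2i}$, so that $T=P+iQ$. First I would record that $P$ and $Q$ both lie in $\mathscr{B}_A\left(\mathscr{H}\right)$: this set is a linear subspace on which $\sharp_A$ acts conjugate-linearly, and $T^{\sharp_A}\in \mathscr{B}_A\left(\mathscr{H}\right)$ whenever $T\in \mathscr{B}_A\left(\mathscr{H}\right)$, so every linear combination of $T$ and $T^{\sharp_A}$ again belongs to $\mathscr{B}_A\left(\mathscr{H}\right)$.

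The one genuinely non-routine point is to transfer the hypothesis $AT=TA$ to $P$ and $Q$, that is, to verify $AP=PA$ and $AQ=QA$; for this it suffices to prove $AT^{\sharp_A}=T^{\sharp_A}A$. Since $A$ is positive, hence selfadjoint, taking adjoints in $AT=TA$ gives $T^*A=AT^*$. Using $T^{\sharp_A}=A^{\dag}T^*A$ together with the Moore--Penrose relation $AA^{\dag}=A^{\dag}A=P_A$, I would compute $AT^{\sharp_A}=AA^{\dag}T^*A=P_AT^*A$ on one hand, and $T^{\sharp_A}A=A^{\dag}T^*A^2=A^{\dag}\left(T^*A\right)A=A^{\dag}\left(AT^*\right)A=P_AT^*A$ on the other, so the two coincide. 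Hence $A$ commutes with the linear combinations $P$ and $Q$, which therefore satisfy both hypotheses ($P,Q\in\mathscr{B}_A\left(\mathscr{H}\right)$ and commutation with $A$) required by Theorem~\ref{thm1}.

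With this in place the remainder is immediate. From $T=P+iQ$ one has $\left\langle Tx,y\right\rangle_A=\left\langle Px,y\right\rangle_A+i\left\langle Qx,y\right\rangle_A$, so the triangle inequality gives $\left|\left\langle Tx,y\right\rangle_A\right|\le \left|\left\langle Px,y\right\rangle_A\right|+\left|\left\langle Qx,y\right\rangle_A\right|$. Applying Theorem~\ref{thm1} to $P$ and to $Q$ separately bounds the first summand by $\left\|f\left(\left|P\right|_A\right)x\right\|_A\left\|g\left(\left|P^{\sharp_A}\right|_A\right)y\right\|_A$ and the second by $\left\|f\left(\left|Q\right|_A\right)x\right\|_A\left\|g\left(\left|Q^{\sharp_A}\right|_A\right)y\right\|_A$; adding these two estimates yields \eqref{eq2.11}.

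I expect the main obstacle to be precisely the commutation transfer $AT^{\sharp_A}=T^{\sharp_A}A$, since without it Theorem~\ref{thm1} could not legitimately be invoked for $P$ and $Q$. Everything after that identity is a single triangle inequality followed by two applications of an already-established theorem, so it is worth isolating and checking this step with care; the computation above shows it goes through using only the positivity of $A$ and the standard Moore--Penrose identities.
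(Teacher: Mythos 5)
Your proposal is correct and follows essentially the same route as the paper: decompose $T=P+iQ$, bound $\left|\left\langle Tx,y\right\rangle_A\right|$ by $\left|\left\langle Px,y\right\rangle_A\right|+\left|\left\langle Qx,y\right\rangle_A\right|$, and apply Theorem~\ref{thm1} to $P$ and $Q$ separately. Your explicit verification that $AT^{\sharp_A}=T^{\sharp_A}A$ (hence $AP=PA$ and $AQ=QA$), so that Theorem~\ref{thm1} is legitimately applicable to $P$ and $Q$, is a detail the paper's proof silently omits, and your use of the plain triangle inequality also avoids the paper's questionable intermediate equality $\left|\left\langle Tx,y\right\rangle_A\right|=\left(\left\langle Px,y\right\rangle_A^2+\left\langle Qx,y\right\rangle_A^2\right)^{1/2}$, which presumes these inner products are real for arbitrary $x,y$.
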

\begin{proof}
	Let $ P+iQ$ be  the $A$-Cartesian decomposition of $T$. Then
	\begin{align*}
	\left| {\left\langle {Tx,y} \right\rangle_A } \right| &= \left( {\left\langle { Px,y} \right\rangle_A ^2  + \left\langle {Qx,y} \right\rangle_A ^2 } \right)^{1/2}  \\
	&\le \left| {\left\langle { P x,y} \right\rangle_A } \right| + \left| {\left\langle {Q x,y} \right\rangle_A } \right| 
	\\
	&\le  \left\| {f\left( \left|P\right|_A
			\right)x} \right\|_A\left\| {g\left( \left|P^{\sharp_A}\right|_A \right)y}
		\right\|_A + \left\| {f\left( \left|Q\right|_A \right)x}
		\right\|_A\left\| {g\left( \left|Q^{\sharp_A}\right|_A \right)y} \right\|_A 
	\end{align*}
	for all $x,y\in \mathscr{H}$, where the last inequality follows form \eqref{eq2.2}.  
\end{proof}
\begin{remark}
The above version of the mixed Schwarz inequality is a generalization of the main result in \cite{Alomari}.
\end{remark}

\begin{corollary}
	\label{cor3}    Let  $T\in \mathscr{B}\left( \mathscr{H}\right)$ such that $AT=TA$, with the $A$-Cartesian decomposition    $T=P+iQ$.  Then 
	\begin{align}
	\left| {\left\langle {Tx,y} \right\rangle_A } \right|
	\le  \left\{ {\left\| { \left|P\right|_A^{2\alpha}x} \right\|_A\left\| { \left|P^{\sharp_A} \right|_A^{2\left(1-\alpha\right)} y} \right\|_A + \left\| { \left|Q\right|_A^{2\alpha}x} \right\|_A\left\| { \left|Q^{\sharp_A}\right|_A^{2\left(1-\alpha\right)} y} \right\|_A} \right\} \label{eq2.12}
	\end{align}
	for all $x,y\in \mathscr{H}$.
\end{corollary}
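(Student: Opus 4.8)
The plan is to derive Corollary \ref{cor3} as the power-function specialization of Theorem \ref{thm3}, applied termwise to the two $A$-Cartesian components of $T$. First I would fix $\alpha \in [0,1]$ and take the pair $f(t) = t^{\alpha}$, $g(t) = t^{1-\alpha}$. These are nonnegative and continuous on $[0,\infty)$ and satisfy $f(t)g(t) = t$, so they are exactly the class of functions admitted in Theorem \ref{thm3}. Since the corollary already assumes $T \in \mathscr{B}_A(\mathscr{H})$ with $AT = TA$, the $A$-Cartesian decomposition $T = P + iQ$ with $P = \re_A(T)$ and $Q = \im_A(T)$ is at hand, and Theorem \ref{thm3} can be invoked verbatim.

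Before substituting I would verify that the data needed to form the four $A$-absolute values are legitimate. The key point, already exploited in the proof of Lemma \ref{lemma3}, is that $AT = TA$ forces $A T^{\sharp_A} = T^{\sharp_A} A$: indeed $T^{\sharp_A} = A^{\dag} T^* A$, and commutation of $A$ with $T$ yields commutation of $A$ with $T^*$ on $\overline{\mathcal{R}(A)}$ and hence with $T^{\sharp_A}$. Consequently $A$ commutes with both $P = \tfrac{T+T^{\sharp_A}}{2}$ and $Q = \tfrac{T-T^{\sharp_A}}{2i}$, so $|P|_A$, $|P^{\sharp_A}|_A$, $|Q|_A$, $|Q^{\sharp_A}|_A$ are genuine positive operators and the continuous functional calculus applies to each of them.

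With these choices the functional calculus gives $f(|P|_A) = |P|_A^{\alpha}$ and $g(|P^{\sharp_A}|_A) = |P^{\sharp_A}|_A^{1-\alpha}$, and similarly for $Q$; substituting directly into \eqref{eq2.11} produces the two summands of \eqref{eq2.12}. To pass to the exponents displayed there I would use that each $|P|_A^{\alpha}$ is $A$-selfadjoint, so that $\| |P|_A^{\alpha} x\|_A^2 = \langle (|P|_A^{\alpha})^{\sharp_A} |P|_A^{\alpha} x, x\rangle_A = \langle |P|_A^{2\alpha} x, x\rangle_A$, and likewise for the three remaining factors; this is what accounts for the powers $2\alpha$ and $2(1-\alpha)$. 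The only step requiring genuine care is this bookkeeping of the $A$-adjoint and commutation relations for $P$ and $Q$ — ensuring that the $A$-absolute values are positive and $A$-selfadjoint so that both the functional calculus and the norm identity above are valid; once that is in place the inequality is an immediate substitution of $f(t) = t^{\alpha}$, $g(t) = t^{1-\alpha}$ into Theorem \ref{thm3}.
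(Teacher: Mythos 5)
Your proposal is correct and is essentially the paper's own proof: the paper simply substitutes $f(t)=t^{\alpha}$, $g(t)=t^{1-\alpha}$ into Theorem \ref{thm3} (its proof text cites Theorem \ref{thm2} and \eqref{eq2.10}, but these are evidently reference typos for Theorem \ref{thm3} and \eqref{eq2.12}), and your verification of the commutation relations needed for the functional calculus is a welcome addition that the paper omits. One caveat on your last step: the identity $\left\| \left|P\right|_A^{\alpha}x \right\|_A^2 = \left\langle \left|P\right|_A^{2\alpha}x,x \right\rangle_A$ produces factors of the form $\left\langle \left|P\right|_A^{2\alpha}x,x \right\rangle_A^{1/2}$ (matching \eqref{eq2.3}), which is not the same quantity as the literal $\left\| \left|P\right|_A^{2\alpha}x \right\|_A$ printed in \eqref{eq2.12}, so what your substitution actually yields is the bound with $\left\| \left|P\right|_A^{\alpha}x \right\|_A$ etc.; the doubled exponents inside the $A$-norms in the corollary's display appear to be a misprint in the statement rather than something your argument (or the paper's) establishes, and you should state the conclusion in the inner-product form rather than claim the identity ``accounts for'' the printed norms.
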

\begin{proof}
	Setting $f(t)=t^{\alpha}$ and $g(t)=t^{1-\alpha}$, $0\le \alpha \le 1$, $t\ge0$ in Theorem \ref{thm2}  we get \eqref{eq2.10}.
\end{proof}

The $A$-Cartesian companion decomposition of the mixed Schwarz inequality
\eqref{eq2.3} can be deduced as follows:
\begin{corollary}
	\label{cor4}Let  $T  \in \mathscr{B}\left( \mathscr{H}\right)$ such that $AT=TA$,
	with the $A$-Cartesian decomposition    $A=P+iQ$.  Then 
	\begin{align}
	\left| {\left\langle {T x, y} \right\rangle_A } \right|  \le
 \frac{1}{2}  \left\langle {\left(\left| P \right|^{2\alpha }+\left| Q \right|^{2\alpha }\right) x,x} \right\rangle_A  +\frac{1}{2}  \left\langle {\left(\left| P^{\sharp_A} \right|^{2\left(1-\alpha\right)}+\left| Q^{\sharp_A} \right|^{2\left(1-\alpha\right)}\right) y,y} \right\rangle_A
	\end{align}
	for all $x,y\in \mathscr{H}$ and any $0\le \alpha\le 1$.
\end{corollary}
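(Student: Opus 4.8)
The plan is to obtain this inequality directly from the $A$-Cartesian estimate of Corollary \ref{cor3} by inserting the elementary arithmetic--geometric mean inequality $ab\le\frac12\left(a^2+b^2\right)$ into each of its two summands, and then rewriting the resulting squared $A$-seminorms as semi-inner products. No new operator-theoretic input beyond what has already been established is needed; the entire argument is a formal manipulation once the relevant absolute-value operators are known to be selfadjoint and to commute with $A$.

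Concretely, I would begin from the bound of Corollary \ref{cor3} (equivalently, from \eqref{eq2.11} with $f(t)=t^{\alpha}$ and $g(t)=t^{1-\alpha}$), namely
\begin{align*}
\left| {\left\langle {Tx,y} \right\rangle_A } \right|
\le \left\| {\left| P \right|_A^{\alpha} x} \right\|_A\left\| {\left| P^{\sharp_A} \right|_A^{1-\alpha} y} \right\|_A
+\left\| {\left| Q \right|_A^{\alpha} x} \right\|_A\left\| {\left| Q^{\sharp_A} \right|_A^{1-\alpha} y} \right\|_A ,
\end{align*}
and apply $ab\le\frac12\left(a^2+b^2\right)$ to each of the two products on the right. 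This produces an upper bound consisting of four squared $A$-seminorms, each carrying the factor $\frac12$. The next step converts these squared seminorms into semi-inner products: since $AT=TA$, the $A$-Cartesian components $P=\re_A\left(T\right)$ and $Q=\im_A\left(T\right)$ are $A$-selfadjoint and commute with $A$, so (as noted earlier in the paper) $\left|P\right|_A$, $\left|Q\right|_A$, $\left|P^{\sharp_A}\right|_A$ and $\left|Q^{\sharp_A}\right|_A$ are genuine selfadjoint operators commuting with $A$, and hence so is every continuous power of them. Using this I would compute
\begin{align*}
\left\| {\left| P \right|_A^{\alpha} x} \right\|_A^2
=\left\langle {A\left| P \right|_A^{\alpha} x,\left| P \right|_A^{\alpha} x} \right\rangle
=\left\langle {A\left| P \right|_A^{2\alpha} x,x} \right\rangle
=\left\langle {\left| P \right|_A^{2\alpha} x,x} \right\rangle_A ,
\end{align*}
and analogously for the three remaining squared seminorms, the $y$-terms producing the exponent $2\left(1-\alpha\right)$. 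Substituting these identities and collecting the $x$-terms and the $y$-terms separately yields exactly the asserted inequality.

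The only delicate point is this last conversion. The identity $\left\|\left|P\right|_A^{\alpha}x\right\|_A^2=\left\langle\left|P\right|_A^{2\alpha}x,x\right\rangle_A$ uses both that $\left|P\right|_A^{\alpha}$ is selfadjoint (so that it may be moved across the inner product to combine the two factors $\left|P\right|_A^{\alpha}$ into $\left|P\right|_A^{2\alpha}$) and that it commutes with $A$ (so that $A$ may be placed in front of the combined operator). Both facts rest on the standing hypothesis $AT=TA$ together with the earlier observation that $\re_A\left(T\right)$ and $\im_A\left(T\right)$ are $A$-selfadjoint; once these are invoked, the remainder of the proof is the routine combination of the AM--GM inequality with Corollary \ref{cor3} described above.
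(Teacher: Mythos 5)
Your proposal is correct and follows essentially the same route as the paper: both derive the bound from the $A$-Cartesian mixed Schwarz estimate for $P$ and $Q$ (Corollary \ref{cor3}, equivalently \eqref{eq2.3} applied to each component) and then apply the elementary inequality $ab\le\frac{1}{2}\left(a^{2}+b^{2}\right)$ to each of the two products before regrouping the $x$- and $y$-terms. The only difference is that you make explicit the conversion $\left\|\left|P\right|_A^{\alpha}x\right\|_A^{2}=\left\langle \left|P\right|_A^{2\alpha}x,x\right\rangle_A$ (and its justification via selfadjointness and commutation with $A$), which the paper performs silently by writing its first display directly in semi-inner-product form.
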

\begin{proof}
From \eqref{eq2.12} we have	
	\begin{align*}
\left| {\left\langle {T x, y} \right\rangle_A } \right|  &\le\left\langle {\left| P \right|^{2\alpha } x,x} \right\rangle_A^{1/2}
\left\langle {\left| P^{\sharp_A} \right|^{2\left( {1 - \alpha } \right)}	y,y} \right\rangle^{1/2}_A +\left\langle {\left| Q \right|_A^{2\alpha } x,x}
\right\rangle^{1/2}_A \left\langle {\left|Q ^{\sharp_A}\right|_A^{2\left( {1 - \alpha} \right)} y,y} \right\rangle^{1/2}_A
\\
&\le \frac{1}{2} \left[\left\langle {\left| P \right|^{2\alpha } x,x} \right\rangle_A+
\left\langle {\left| P^{\sharp_A} \right|^{2\left( {1 - \alpha } \right)}	y,y} \right\rangle_A \right]+\frac{1}{2} \left[\left\langle {\left| Q \right|^{2\alpha } x,x} \right\rangle_A+
\left\langle {\left| Q^{\sharp_A} \right|^{2\left( {1 - \alpha } \right)}	y,y} \right\rangle_A \right]
\\
&\le \frac{1}{2} \left[\left\langle {\left| P \right|^{2\alpha } x,x} \right\rangle_A+
\left\langle {\left| Q \right|^{2\alpha } x,x} \right\rangle_A \right]+\frac{1}{2} \left[ \left\langle {\left| P^{\sharp_A} \right|^{2\left( {1 - \alpha } \right)}	y,y} \right\rangle_A  +
\left\langle {\left| Q^{\sharp_A} \right|^{2\left( {1 - \alpha } \right)}	y,y} \right\rangle_A \right]
\\
&\le \frac{1}{2}  \left\langle {\left(\left| P \right|^{2\alpha }+\left| Q \right|^{2\alpha }\right) x,x} \right\rangle_A  +\frac{1}{2}  \left\langle {\left(\left| P^{\sharp_A} \right|^{2\left(1-\alpha\right)}+\left| Q^{\sharp_A} \right|^{2\left(1-\alpha\right)}\right) y,y} \right\rangle_A,
\end{align*}
which gives the required result.
\end{proof}

\section{  $A$-numerical radius inequalities}
 In this section some  inequalities for the $A$-numerical radius are presented, indeed the next two results generalizes the first two results in \cite{EF}.
\begin{theorem}
	Let    $T\in \mathscr{B}_{A} \left(\mathscr{H}\right)$, such that $AT=TA$, $0 \le \alpha \le 1$ and $r\ge1$. Then
	\begin{align}
	w^r_A\left(T\right) \le 
	\frac{1}{2}   \left\| {   \left| T \right|^{2r\alpha}_A+\left| {T^{\sharp_A } } \right|^{2r\left(1-\alpha\right)}_A  } \right\|_A
	\end{align}
\end{theorem}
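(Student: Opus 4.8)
The plan is to reduce the statement to a pointwise estimate over $A$-unit vectors and then to feed that estimate into the mixed Schwarz inequality and the H\"older--McCarty inequality established earlier. First I would recall that
\[
w_A^r(T) = \sup_{\|x\|_A = 1} \left| \langle Tx, x\rangle_A \right|^r ,
\]
so it suffices to prove, for every $x \in \mathscr{H}$ with $\|x\|_A = 1$, the pointwise bound $\left|\langle Tx, x\rangle_A\right|^r \le \tfrac{1}{2}\langle (\left|T\right|_A^{2r\alpha} + \left|T^{\sharp_A}\right|_A^{2r(1-\alpha)})x, x\rangle_A$, after which the conclusion follows by taking the supremum.

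For the pointwise estimate I would put $y = x$ in the mixed Schwarz inequality \eqref{eq2.3} (legitimate since $AT = TA$), obtaining $\left|\langle Tx, x\rangle_A\right|^2 \le \langle \left|T\right|_A^{2\alpha}x, x\rangle_A\,\langle \left|T^{\sharp_A}\right|_A^{2(1-\alpha)}x, x\rangle_A$. Writing $a = \langle \left|T\right|_A^{2\alpha}x, x\rangle_A$ and $b = \langle \left|T^{\sharp_A}\right|_A^{2(1-\alpha)}x, x\rangle_A$, this reads $\left|\langle Tx,x\rangle_A\right|\le (ab)^{1/2}$, so raising to the power $r$ and applying the arithmetic--geometric mean inequality $\sqrt{a^r b^r}\le \tfrac{1}{2}(a^r+b^r)$ gives
\[
\left|\langle Tx, x\rangle_A\right|^r \le \frac{1}{2}\left[\left(\langle \left|T\right|_A^{2\alpha}x, x\rangle_A\right)^r + \left(\langle \left|T^{\sharp_A}\right|_A^{2(1-\alpha)}x, x\rangle_A\right)^r\right].
\]
Because $r \ge 1$ and both $\left|T\right|_A^{2\alpha}$ and $\left|T^{\sharp_A}\right|_A^{2(1-\alpha)}$ are positive operators, the H\"older--McCarty inequality \eqref{eq2.6} (applied with each of these operators in the role of $T$) upgrades the exponents to $\left(\langle \left|T\right|_A^{2\alpha}x, x\rangle_A\right)^r \le \langle \left|T\right|_A^{2r\alpha}x, x\rangle_A$ and similarly for the second summand. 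Substituting these back yields precisely the claimed pointwise bound.

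Finally I would take the supremum over $A$-unit vectors. Setting $S = \left|T\right|_A^{2r\alpha} + \left|T^{\sharp_A}\right|_A^{2r(1-\alpha)}$, the pointwise estimate gives $w_A^r(T) \le \tfrac{1}{2}\sup_{\|x\|_A=1}\langle Sx, x\rangle_A$; since $S$ is positive this supremum is $w_A(S)$, which is dominated by $\|S\|_A$ through the inequality $w_A(\cdot)\le\|\cdot\|_A$ recorded in the introduction. The step I expect to demand the most care is this last one: one must verify that $S$ is $A$-bounded and that the form $x\mapsto\langle Sx,x\rangle_A$ is real and nonnegative (so that raising $a$ and $b$ to a power and applying H\"older--McCarty are legitimate, and so that $\sup\langle Sx,x\rangle_A = w_A(S)$). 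This is exactly where the hypothesis $AT=TA$ is used: as in the proof of Theorem \ref{thm1} it forces $A$ to commute with $\left|T\right|_A$ and $\left|T^{\sharp_A}\right|_A$, making $S$ an $A$-self-adjoint, $A$-positive operator. I would also double-check the ordering of the two elementary steps (AM--GM before H\"older--McCarty), since reversing them would leave the inner-product expressions raised to the wrong power.
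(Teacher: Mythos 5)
Your proposal is correct and follows essentially the same route as the paper: mixed Schwarz with $y=x$, then the power--mean/AM--GM step $\sqrt{ab}\le\bigl(\tfrac{a^r+b^r}{2}\bigr)^{1/r}$, then H\"older--McCarty to absorb the exponent $r$ into the operators, and finally the supremum bounded via $w_A(\cdot)\le\|\cdot\|_A$. Your extra care about the positivity of the summands and the legitimacy of the last supremum step is a welcome refinement but does not change the argument.
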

\begin{proof}
	Let $x\in \mathscr{H}$ be $A$-unit vector, then 
	\begin{align*}
	\left| {\left\langle {T   x,x} \right\rangle_A } \right|    &\le \left\langle {\left| T \right|^{2\alpha}_A x,x} \right\rangle_A^{1/2} \left\langle {\left| {T^{\sharp_A } } \right|^{2\left(1-\alpha\right)}_A x,x} \right\rangle_A^{1/2}\qquad\qquad \text{(by \eqref{eq2.3})}
	\\
	&\le \left({ \frac{\left\langle {\left| T \right|^{2\alpha}_A x,x} \right\rangle_A^{r} + \left\langle {\left| {T^{\sharp_A } } \right|^{2\left(1-\alpha\right)}_A x,x} \right\rangle^{r}_A}{2} }\right)^{\frac{1}{r}}  \qquad  \text{(by Power mean inequality)}
	\\
	&\le \left({ \frac{\left\langle {\left| T \right|^{2r\alpha}_A x,x} \right\rangle_A + \left\langle {\left| {T^{\sharp_A } } \right|^{2r\left(1-\alpha\right)}_A x,x} \right\rangle_A}{2} }\right)^{\frac{1}{r}}\qquad  \text{(by \eqref{eq2.6})} 
	\end{align*}
	Therefore,
	\begin{align*}
	\left| {\left\langle {T   x,x} \right\rangle_A } \right| ^r \le \frac{1}{2}   \left\langle { \left( \left| T \right|^{2r\alpha}_A+\left| {T^{\sharp_A } } \right|^{2r\left(1-\alpha\right)}_A  \right)x,x} \right\rangle_A.     
	\end{align*}
	Taking the supremum over all $A$-unit vector $x\in \mathscr{H}$ we get the required result.   
\end{proof}

\begin{theorem}
		Let    $T\in \mathscr{B}_{A} \left(\mathscr{H}\right)$, such that $AT=TA$, $0 \le \alpha \le 1$ and $r\ge1$. Then
	\begin{align}
	w^{2r}_A\left(T\right) \le 
	\left\| {   \alpha\left| T \right|^{2r}_A +\left(1-\alpha\right)\left| {T^{\sharp_A } } \right|^{2r}_A    } \right\|_A
	\end{align}
\end{theorem}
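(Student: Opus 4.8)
The plan is to reduce the estimate to the weighted mixed Schwarz inequality \eqref{eq2.3} evaluated on the diagonal $y=x$, and then to convert the resulting geometric mean into the desired convex combination by a Young-type inequality, proceeding exactly as in the preceding theorem but with the weights $\alpha$ and $1-\alpha$ in place of $\tfrac12,\tfrac12$. First I would fix an $A$-unit vector $x\in\mathscr{H}$ and record, from \eqref{eq2.3} with $y=x$,
\[
\left|\left\langle Tx,x\right\rangle_A\right|^2 \le \left\langle \left|T\right|_A^{2\alpha}x,x\right\rangle_A\,\left\langle \left|T^{\sharp_A}\right|_A^{2(1-\alpha)}x,x\right\rangle_A.
\]

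Next I would strip the fractional powers using the concave Hölder--McCarty inequality \eqref{eq2.7}: since $\left|T\right|_A^{2}$ and $\left|T^{\sharp_A}\right|_A^{2}$ are positive operators, applying \eqref{eq2.7} with the exponents $\alpha,1-\alpha\in[0,1]$ gives $\left\langle \left|T\right|_A^{2\alpha}x,x\right\rangle_A \le \left\langle \left|T\right|_A^{2}x,x\right\rangle_A^{\alpha}$ and the analogous bound for the $\sharp_A$-term, whence
\[
\left|\left\langle Tx,x\right\rangle_A\right|^2 \le \left\langle \left|T\right|_A^{2}x,x\right\rangle_A^{\alpha}\left\langle \left|T^{\sharp_A}\right|_A^{2}x,x\right\rangle_A^{1-\alpha}.
\]
Raising to the power $r$ and then applying the weighted arithmetic--geometric mean (Young) inequality $u^{\alpha}v^{1-\alpha}\le \alpha u+(1-\alpha)v$ to $u=\left\langle \left|T\right|_A^{2}x,x\right\rangle_A^{r}$ and $v=\left\langle \left|T^{\sharp_A}\right|_A^{2}x,x\right\rangle_A^{r}$ yields
\[
\left|\left\langle Tx,x\right\rangle_A\right|^{2r} \le \alpha\left\langle \left|T\right|_A^{2}x,x\right\rangle_A^{r}+(1-\alpha)\left\langle \left|T^{\sharp_A}\right|_A^{2}x,x\right\rangle_A^{r}.
\]
Finally I would invoke the convex Hölder--McCarty inequality \eqref{eq2.6} (exponent $r\ge1$) on each summand, using $\left\langle \left|T\right|_A^{2}x,x\right\rangle_A^{r}\le \left\langle \left|T\right|_A^{2r}x,x\right\rangle_A$ and likewise for the other term, to arrive at
\[
\left|\left\langle Tx,x\right\rangle_A\right|^{2r} \le \left\langle \left(\alpha\left|T\right|_A^{2r}+(1-\alpha)\left|T^{\sharp_A}\right|_A^{2r}\right)x,x\right\rangle_A.
\]

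The remaining step is to take the supremum over all $A$-unit vectors $x$. Writing $S:=\alpha\left|T\right|_A^{2r}+(1-\alpha)\left|T^{\sharp_A}\right|_A^{2r}$, this operator is a nonnegative combination of the positive operators $\left|T\right|_A^{2r}$ and $\left|T^{\sharp_A}\right|_A^{2r}$, so $\left\langle Sx,x\right\rangle_A\ge0$ and hence $\sup_{\|x\|_A=1}\left\langle Sx,x\right\rangle_A=w_A(S)$; the hypothesis $AT=TA$ forces $S$ to commute with $A$ and thus to be $A$-normal, so that $w_A(S)=\left\|S\right\|_A$, giving $w^{2r}_A(T)\le\left\|S\right\|_A$ as claimed. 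I expect the only genuinely delicate point to be this last identification $\sup_{\|x\|_A=1}\left\langle Sx,x\right\rangle_A=\left\|S\right\|_A$: one must confirm that $S$ is $A$-normal (equivalently, $A$-selfadjoint and commuting with $A$), which is precisely where the commutativity assumption $AT=TA$ enters and which is exactly the mechanism already used at the end of the previous theorem.
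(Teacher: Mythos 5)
Your argument follows the paper's proof essentially step for step: the mixed Schwarz inequality \eqref{eq2.3} with $y=x$, the concave H\"{o}lder--McCarty bound \eqref{eq2.7} to remove the exponents $\alpha$ and $1-\alpha$, the weighted AM--GM (Young) inequality, the convex H\"{o}lder--McCarty bound \eqref{eq2.6}, and finally the supremum over $A$-unit vectors; the only cosmetic difference is that you raise to the power $r$ before applying Young's inequality rather than after. Your closing remark on identifying $\sup_{\|x\|_A=1}\langle Sx,x\rangle_A$ with $\|S\|_A$ is a point the paper passes over silently, and addressing it does no harm.
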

\begin{proof}
	Let $x\in \mathscr{H}$ be $A$-unit vector, then 
	\begin{align*}
	\left| {\left\langle {T   x,x} \right\rangle_A } \right|^2    &\le \left\langle {\left| T \right|^{2\alpha}_A x,x} \right\rangle_A  \left\langle {\left| {T^{\sharp_A } } \right|^{2\left(1-\alpha\right)}_A x,x} \right\rangle_A  \qquad\qquad\qquad\qquad \text{(by \eqref{eq2.3})}
	\\
	&\le \left\langle {\left| T \right|^{2}_A x,x} \right\rangle^{\alpha}_A  \left\langle {\left| {T^{\sharp_A } } \right|^{2}_A x,x} \right\rangle^{\left(1-\alpha\right)}_A\qquad \qquad\qquad\qquad\text{(by \eqref{eq2.7})}
	\\
	&\le  \left(\alpha\left\langle {\left| T \right|^{2}_A x,x} \right\rangle^r_A  +\left(1-\alpha\right)\left\langle {\left| {T^{\sharp_A } } \right|^{2}_A x,x} \right\rangle^r_A \right)^{1/r}\qquad \text{(by AM-GM inequality)}
	\\
	&\le  \left(\alpha\left\langle {\left| T \right|^{2r}_A x,x} \right\rangle_A  +\left(1-\alpha\right)\left\langle {\left| {T^{\sharp_A } } \right|^{2r}_A x,x} \right\rangle_A \right)^{1/r}\qquad \text{(by \eqref{eq2.6})}
	\\
	&\le   \left\langle {\left(\alpha\left| T \right|^{2r}_A +\left(1-\alpha\right)\left| {T^{\sharp_A } } \right|^{2r}_A \right) x,x} \right\rangle_A ^{1/r}.
	\end{align*}
	Therefore,
	\begin{align*}
	\left| {\left\langle {T   x,x} \right\rangle_A } \right|^{2r} \le \left\langle {\left(\alpha\left| T \right|^{2r}_A +\left(1-\alpha\right)\left| {T^{\sharp_A } } \right|^{2r}_A \right) x,x} \right\rangle_A.     
	\end{align*}
	Taking the supremum over all $A$-unit vector $x\in \mathscr{H}$ we get the required result.   
\end{proof}

\begin{theorem}
	\label{thm6}	Let    $T\in \mathscr{B}_{A} \left(\mathscr{H}\right)$, such that $AT=TA$, with the $A$-Cartesian decomposition    $T=P+iQ$.   If $f$ and $g$ are as in Theorem \ref{thm1}. Then
	\begin{align}
	w_A \left(T\right)  &\le   \left\| {f^p \left(
		{\left| P \right|_A} \right) + f^p \left( {\left| Q \right|}
		\right)} \right\|_A^{1/p} \left\| {g^q \left( {\left| P^{\sharp_A} \right|_A}
		\right) + g^q \left( {\left| Q^{\sharp_A} \right|_A} \right)} \right\|_A^{1/q}
	\\
	&\le \left\|\frac{1}{p}\left[ {f^p \left( {\left| P \right|_A} \right) + f^p \left( {\left| Q \right|_A} \right)} \right]+\frac{1}{q}\left[ {g^q \left( {\left| P^{\sharp_A} \right|_A} \right) + g^q \left( {\left| Q^{\sharp_A} \right|_A} \right)} \right]\right\|\nonumber
	\end{align}
	for all $p,q\ge2$ with $\frac{1}{p}+\frac{1}{q}=1$.
\end{theorem}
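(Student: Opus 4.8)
The plan is to run the $A$-Cartesian mixed Schwarz inequality \eqref{eq2.11} of Theorem \ref{thm3} through the discrete H\"older inequality and then convert the resulting $A$-norms into $A$-quadratic forms by the H\"older--McCarty inequality \eqref{eq2.6}. First I would put $y=x$ in \eqref{eq2.11}, so that for every $A$-unit vector $x$,
\begin{align*}
\left| \left\langle Tx,x\right\rangle_A \right| \le \left\| f\left(\left|P\right|_A\right)x\right\|_A \left\| g\left(\left|P^{\sharp_A}\right|_A\right)x\right\|_A + \left\| f\left(\left|Q\right|_A\right)x\right\|_A \left\| g\left(\left|Q^{\sharp_A}\right|_A\right)x\right\|_A.
\end{align*}
Applying the two-term discrete H\"older inequality with conjugate exponents $p,q$ then gives
\begin{align*}
\left| \left\langle Tx,x\right\rangle_A \right| \le \left( \left\| f\left(\left|P\right|_A\right)x\right\|_A^p + \left\| f\left(\left|Q\right|_A\right)x\right\|_A^p \right)^{1/p} \left( \left\| g\left(\left|P^{\sharp_A}\right|_A\right)x\right\|_A^q + \left\| g\left(\left|Q^{\sharp_A}\right|_A\right)x\right\|_A^q \right)^{1/q}.
\end{align*}

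The heart of the argument is to replace each $A$-norm power by an $A$-quadratic form, and this is exactly where the hypothesis $AT=TA$ enters. It propagates through the $\sharp_A$-operation and the continuous functional calculus so that $A$ commutes with the positive operators $\left|P\right|_A,\left|Q\right|_A,\left|P^{\sharp_A}\right|_A,\left|Q^{\sharp_A}\right|_A$, and hence with $f$ and $g$ of them. In particular $f\left(\left|P\right|_A\right)$ is selfadjoint and commutes with $A$, which lets me write $\left\|f\left(\left|P\right|_A\right)x\right\|_A^2=\left\langle f^2\left(\left|P\right|_A\right)x,x\right\rangle_A$. Since $p,q\ge2$ we have $p/2,q/2\ge1$, so \eqref{eq2.6} applied to the positive operator $f^2\left(\left|P\right|_A\right)$ yields
\begin{align*}
\left\| f\left(\left|P\right|_A\right)x\right\|_A^p = \left\langle f^2\left(\left|P\right|_A\right)x,x\right\rangle_A^{p/2} \le \left\langle f^p\left(\left|P\right|_A\right)x,x\right\rangle_A,
\end{align*}
and the analogous estimates hold for the $\left|Q\right|_A$, $\left|P^{\sharp_A}\right|_A$ and $\left|Q^{\sharp_A}\right|_A$ terms. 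Writing $S_1=f^p\left(\left|P\right|_A\right)+f^p\left(\left|Q\right|_A\right)$ and $S_2=g^q\left(\left|P^{\sharp_A}\right|_A\right)+g^q\left(\left|Q^{\sharp_A}\right|_A\right)$ and summing, I arrive at the common inequality
\begin{align*}
\left| \left\langle Tx,x\right\rangle_A \right| \le \left\langle S_1 x,x\right\rangle_A^{1/p} \left\langle S_2 x,x\right\rangle_A^{1/q}.
\end{align*}

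From here the two stated bounds are reached by two different final steps. For the first bound I would use $\left\langle S_i x,x\right\rangle_A\le\left\|S_i\right\|_A$ for $A$-unit $x$ (Cauchy--Schwarz for $\left\langle\cdot,\cdot\right\rangle_A$ together with $A$-boundedness) and take the supremum over all $A$-unit $x$, obtaining $w_A(T)\le\left\|S_1\right\|_A^{1/p}\left\|S_2\right\|_A^{1/q}$. For the second bound I would instead apply Young's inequality $uv\le u^p/p+v^q/q$ to $u=\left\langle S_1 x,x\right\rangle_A^{1/p}$ and $v=\left\langle S_2 x,x\right\rangle_A^{1/q}$, which gives $\left|\left\langle Tx,x\right\rangle_A\right|\le\left\langle\left(\tfrac1p S_1+\tfrac1q S_2\right)x,x\right\rangle_A$, and then pass to the supremum to conclude $w_A(T)\le\left\|\tfrac1p S_1+\tfrac1q S_2\right\|_A$.

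The main obstacle I expect is the commutation bookkeeping in the second paragraph: one must verify that $AT=TA$ really does force $A$ to commute with $\left|P\right|_A=\left(AP^{\sharp_A}P\right)^{1/2}$ and with $\left|P^{\sharp_A}\right|_A$ (through commutation with $\left|P\right|_A^2$ and the uniqueness of the positive square root), and hence with $f$ and $g$ of these operators, since this commutation is precisely what legitimizes both the identity $\left\|f\left(\left|P\right|_A\right)x\right\|_A^2=\left\langle f^2\left(\left|P\right|_A\right)x,x\right\rangle_A$ and the nonnegativity of the scalars needed for \eqref{eq2.6}. I would also remark that the Young-inequality route naturally produces the second bound with the $A$-operator seminorm $\left\|\cdot\right\|_A$, and that the two displayed estimates are best viewed as obtained independently from the common inequality rather than one being deduced from the other.
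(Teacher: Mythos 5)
Your proposal is correct and follows essentially the same route as the paper's proof: set $y=x$ in \eqref{eq2.11}, apply the discrete H\"older inequality, convert the $A$-norms to quadratic forms and invoke \eqref{eq2.6} with exponents $p/2,q/2\ge1$, and finish with the seminorm bound for the first estimate and Young's (AM--GM) inequality for the second. Your added remarks on the commutation bookkeeping and on the two bounds being derived independently from the common intermediate inequality are accurate clarifications of points the paper leaves implicit.
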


\begin{proof}
	Letting $y=x$ in  \eqref{eq2.11}, then we have
	\begin{align*}
	&\left| {\left\langle {Tx,y} \right\rangle_A } \right|   \\
	&\le     \left\{ {\left\| {f\left( {\left| P \right|_A} \right)x} \right\|_A\left\| {g\left( {\left| P^{\sharp_A} \right|} \right)y} \right\|_A + \left\| {f\left( {\left| Q \right|_A} \right)x} \right\|_A\left\| {g\left( {\left| Q^{\sharp_A} \right|_A} \right)y} \right\|_A} \right\}
	\\
	&\le   
	\left( {\left\| {f\left( {\left| P \right|_A} \right)x} \right\|_A^p  + \left\| {f\left( {\left| Q \right|_A} \right)x} \right\|_A^p } \right)^{1/p}
	\\
	&\qquad\qquad\times\left( {\left\| {g\left( {\left| P^{\sharp_A} \right|_A} \right)y} \right\|_A^q  + \left\| {g\left( {\left| Q^{\sharp_A} \right|_A} \right)y} \right\|_A^q } \right)^{1/q} \nonumber \qquad\qquad \qquad {(\rm{by\,\,H\text{\"{o}}lder\,\, inequaity})}
	\\
	&=\left( {\left\langle {f^2 \left( {\left| P \right|_A} \right)x,x} \right\rangle_A^{p/2}  + \left\langle {f^2 \left( {\left| Q \right|_A} \right)x,x} \right\rangle_A ^{p/2} } \right)^{1/p}
	\\
	&\qquad \times\left( {\left\langle {g^2 \left( {\left| P^{\sharp_A} \right|} \right)x,x} \right\rangle_A ^{q/2}  + \left\langle {g^2 \left( {\left| Q^{\sharp_A} \right|_A} \right)x,x} \right\rangle_A ^{q/2} } \right)^{1/q}  \nonumber
	\\
	&\le \left( {\left\langle {f^p \left( {\left| P \right|_A} \right)x,x} \right\rangle_A  + \left\langle {f^p \left( {\left| Q_A \right|} \right)x,x} \right\rangle_A } \right)^{1/p}
	\\
	&\qquad\times \left( {\left\langle {g^q \left( {\left| P^{\sharp_A} \right|_A} \right)x,x} \right\rangle  + \left\langle {g^q \left( {\left| Q^{\sharp_A} \right|_A} \right)x,x} \right\rangle_A } \right)^{1/q}\qquad\qquad\qquad   \qquad\qquad{\text({\rm{by}}\,\, \eqref{eq2.6})}\nonumber
	\\
	&\le \left\langle {\left[ {f^p \left( {\left| P \right|_A} \right) + f^p \left( {\left| Q \right|_A} \right)} \right]x,x} \right\rangle_A ^{1/p} \left\langle {\left[ {g^q \left( {\left| P^{\sharp_A} \right|_A} \right) + g^q \left( {\left| Q^{\sharp_A} \right|_A} \right)} \right]x,x} \right\rangle_A ^{1/q}  \nonumber
	\\
	&\le \frac{1}{p}\left\langle {\left[ {f^p \left( {\left| P \right|_A} \right) + f^p \left( {\left| Q \right|_A} \right)} \right]x,x} \right\rangle_A + \frac{1}{q}\left\langle {\left[ {g^q \left( {\left| P^{\sharp_A} \right|_A} \right) + g^q \left( {\left| Q^{\sharp_A} \right|_A} \right)} \right]x,x} \right\rangle_A  \qquad  \text{(by AM-GM inequality)}
	\\
	&\le \left\langle {\left\{\frac{1}{p}\left[ {f^p \left( {\left| P \right|_A} \right) + f^p \left( {\left| Q \right|_A} \right)} \right]+\frac{1}{q}\left[ {g^q \left( {\left| P^{\sharp_A} \right|_A} \right) + g^q \left( {\left| Q^{\sharp_A} \right|_A} \right)} \right]\right\} x,x} \right\rangle_A  	   
	\end{align*}
	for all $p,q\ge2$ with $\frac{1}{p}+\frac{1}{q}=1$. Taking the
	supremum over all $A$-unit vector   $x\in \mathscr{H}$ we get the
	desired result.
	
\end{proof}

\begin{corollary}
	Let    $T\in \mathscr{B}_{A} \left(\mathscr{H}\right)$, such that $AT=TA$,
	with the $A$-Cartesian decomposition    $T=P+iQ$.   If $f$ and $g$ are as in
	Theorem \ref{thm1}. Then
	\begin{align*}
	w_A \left(T\right)  &\le   	\sqrt {\left\| {\left| P \right|_A^{2\alpha }  + \left| Q \right|_A^{2\alpha } } \right\|} \,\,\,\sqrt {\left\| {\left| {P^{\sharp_A} } \right|_A^{2\left( {1 - \alpha } \right)}  + \left| {Q^{\sharp_A} } \right|_A^{2\left( {1 - \alpha } \right)} } \right\|} 
	\\
	&\le 	 \frac{1}{2} \left\| {\left| P \right|_A^{2\alpha }  + \left| Q \right|_A^{2\alpha }  +  \left| {P^{\sharp_A} } \right|_A^{2\left( {1 - \alpha } \right)}  + \left| {Q^{\sharp_A} } \right|_A^{2\left( {1 - \alpha } \right)} } \right\|   
	\end{align*}
	for $0\le \alpha\le 1$.
\end{corollary}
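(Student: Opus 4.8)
The plan is to read off both displayed estimates as the single specialization of Theorem~\ref{thm6} obtained by taking the admissible pair $f(t)=t^{\alpha}$, $g(t)=t^{1-\alpha}$ together with the conjugate exponents $p=q=2$. First I would confirm that this choice is legitimate: $f$ and $g$ are nonnegative and continuous on $[0,\infty)$ and satisfy $f(t)g(t)=t^{\alpha}t^{1-\alpha}=t$ for all $t\ge 0$, so they are of the type allowed in Theorem~\ref{thm1} and hence usable in Theorem~\ref{thm6}; and $p=q=2$ is precisely the boundary case $p,q\ge 2$ with $\tfrac1p+\tfrac1q=1$ under which Theorem~\ref{thm6} is stated. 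Since the hypotheses on $T$ ($T\in\mathscr{B}_A(\mathscr{H})$, $AT=TA$, with $A$-Cartesian decomposition $T=P+iQ$) are identical to those of Theorem~\ref{thm6}, no additional preparation on $T$ is required.

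With these functions the continuous functional calculus collapses the four operator terms appearing in Theorem~\ref{thm6}. Since $f^{p}=f^{2}$ is the map $t\mapsto t^{2\alpha}$, one has $f^{2}(|P|_A)=|P|_A^{2\alpha}$ and $f^{2}(|Q|_A)=|Q|_A^{2\alpha}$; since $g^{q}=g^{2}$ is the map $t\mapsto t^{2(1-\alpha)}$, one has $g^{2}(|P^{\sharp_A}|_A)=|P^{\sharp_A}|_A^{2(1-\alpha)}$ and $g^{2}(|Q^{\sharp_A}|_A)=|Q^{\sharp_A}|_A^{2(1-\alpha)}$. Feeding these into the first inequality of Theorem~\ref{thm6} and using $1/p=1/q=1/2$ gives
\[
w_A(T)\le \bigl\| |P|_A^{2\alpha}+|Q|_A^{2\alpha}\bigr\|^{1/2}\,\bigl\| |P^{\sharp_A}|_A^{2(1-\alpha)}+|Q^{\sharp_A}|_A^{2(1-\alpha)}\bigr\|^{1/2},
\]
which is the first asserted bound after rewriting the $1/2$ powers as square roots. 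Feeding the same four terms into the second inequality of Theorem~\ref{thm6} turns the weights $1/p$ and $1/q$ into $1/2$ each, and pulling this common factor out of the single norm produces the second asserted bound $\tfrac12\bigl\| |P|_A^{2\alpha}+|Q|_A^{2\alpha}+|P^{\sharp_A}|_A^{2(1-\alpha)}+|Q^{\sharp_A}|_A^{2(1-\alpha)}\bigr\|$.

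I expect no genuine obstacle here, only bookkeeping together with one point that deserves care. The routine part is the exponent arithmetic $2\cdot\alpha=2\alpha$ and $2\cdot(1-\alpha)=2(1-\alpha)$, which must be tracked so the powers match the statement, and the reading of the unsubscripted $\|\cdot\|$ as the $A$-operator seminorm $\|\cdot\|_A$ carried by Theorem~\ref{thm6}. The point requiring care is the status of the two displayed right-hand sides: each is obtained directly from the respective line of Theorem~\ref{thm6} and is individually a valid upper bound for $w_A(T)$, so the result is established by this double specialization \emph{without} attempting to derive the second bound from the first. This is the step I would be most careful about, because the naive route---applying the scalar AM--GM inequality $\sqrt{\|U\|\,\|V\|}\le\tfrac12(\|U\|+\|V\|)$ to $U=|P|_A^{2\alpha}+|Q|_A^{2\alpha}$ and $V=|P^{\sharp_A}|_A^{2(1-\alpha)}+|Q^{\sharp_A}|_A^{2(1-\alpha)}$ and then invoking $\|U\|+\|V\|\ge\|U+V\|$---points the wrong way, so the chaining should be understood as presenting the two independent consequences of Theorem~\ref{thm6} rather than as a proof that the first quantity dominates the second.
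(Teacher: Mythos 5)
Your proof is correct and is exactly the paper's proof: the paper's argument is the one-line specialization ``take $f(t)=t^{\alpha}$, $g(t)=t^{1-\alpha}$ and set $p=q=2$ in Theorem~\ref{thm6}.'' Your additional remark that the two displayed bounds should be read as independent consequences of the two lines of Theorem~\ref{thm6} (rather than as a claim that the first quantity dominates the second, which fails in general, e.g.\ for orthogonal rank-one projections) is a sound and worthwhile clarification, but it does not alter the fact that the route is the same.
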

\begin{proof}
Take $f(t)=t^{\alpha}$ and $g(t)=t^{1-\alpha}$ $(0\le \alpha\le 1)$, and setting $p=q=2$ in Theorem \ref{thm6}.
\end{proof}
\begin{theorem}
\label{thm7}Let $T_i, \in
\mathscr{B}_A\left( \mathscr{H}\right)$ $(i=1,\cdots,n)$ such that
$AT_i = T_iA$. Then,	
\begin{align}
w^p_A\left(\sum_{i=1}^{n} T_i \right) \le
\frac{1}{2n^{p-1}} \left\| \sum_{i=1}^n{\left(\left| {T_i } \right|_A^{2p\alpha } +\left| {T^{\sharp_A}_i } \right|_A^{2p\left( {1 - \alpha } \right)}\right)} \right\|_A .
\end{align}
\end{theorem}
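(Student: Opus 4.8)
The plan is to fix an $A$-unit vector $x\in\mathscr{H}$, bound the scalar $\left|\left\langle\left(\sum_{i=1}^n T_i\right)x,x\right\rangle_A\right|$, and take the supremum only at the very end. First I would apply the triangle inequality to get $\left|\left\langle\left(\sum_{i=1}^n T_i\right)x,x\right\rangle_A\right|\le\sum_{i=1}^n\left|\left\langle T_ix,x\right\rangle_A\right|$. Since each $T_i$ satisfies $AT_i=T_iA$, the $\alpha$-form \eqref{eq2.3} of the mixed Schwarz inequality is available for $T_i$; taking $y=x$ there and following it by the scalar AM--GM inequality $\sqrt{bc}\le\tfrac12(b+c)$ yields, for each $i$, the bound $\left|\left\langle T_ix,x\right\rangle_A\right|\le\tfrac12\left(\left\langle\left|T_i\right|_A^{2\alpha}x,x\right\rangle_A+\left\langle\left|T_i^{\sharp_A}\right|_A^{2(1-\alpha)}x,x\right\rangle_A\right)$.

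Next I would raise the resulting sum to the $p$-th power. Writing $d_i$ for the $i$-th summand, the power-mean (Jensen) inequality for the convex map $t\mapsto t^p$ on $[0,\infty)$ gives $\left(\sum_{i=1}^n d_i\right)^p\le n^{p-1}\sum_{i=1}^n d_i^p$, and a second application to each pair gives $\left(\left\langle\left|T_i\right|_A^{2\alpha}x,x\right\rangle_A+\left\langle\left|T_i^{\sharp_A}\right|_A^{2(1-\alpha)}x,x\right\rangle_A\right)^p\le 2^{p-1}\left(\left\langle\left|T_i\right|_A^{2\alpha}x,x\right\rangle_A^p+\left\langle\left|T_i^{\sharp_A}\right|_A^{2(1-\alpha)}x,x\right\rangle_A^p\right)$. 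Each $\left|T_i\right|_A^{2\alpha}$ and $\left|T_i^{\sharp_A}\right|_A^{2(1-\alpha)}$ is positive, so the H\"older--McCarthy inequality \eqref{eq2.6} (for $p\ge1$ and $A$-unit $x$) lets me pull the exponent inside: $\left\langle\left|T_i\right|_A^{2\alpha}x,x\right\rangle_A^p\le\left\langle\left|T_i\right|_A^{2p\alpha}x,x\right\rangle_A$, and likewise for the companion factor. Collecting the constants $\tfrac1{2^p}\cdot n^{p-1}\cdot 2^{p-1}$ and recombining the two scalar terms into a single $A$-inner product gives $\left|\left\langle\left(\sum_{i=1}^n T_i\right)x,x\right\rangle_A\right|^p\le\tfrac{n^{p-1}}2\left\langle\sum_{i=1}^n\left(\left|T_i\right|_A^{2p\alpha}+\left|T_i^{\sharp_A}\right|_A^{2p(1-\alpha)}\right)x,x\right\rangle_A$. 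Taking the supremum over all $A$-unit $x$ then turns the right-hand inner product into the $A$-operator seminorm of the displayed positive operator.

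The main obstacle I anticipate is purely the bookkeeping of the constants through the two nested power-mean steps, since it is this stage that produces the factor $n^{p-1}$; I would also double-check that the normalization asserted in the statement is consistent with it, as the constant that emerges naturally from this argument is $\tfrac{n^{p-1}}2$. Beyond that, the remaining points needing care are the verifications that the auxiliary inequalities apply termwise: the commutation $AT_i=T_iA$ is exactly what makes \eqref{eq2.3} available for each $T_i$, and the positivity of $\left|T_i\right|_A^{2p\alpha}$ and $\left|T_i^{\sharp_A}\right|_A^{2p(1-\alpha)}$ (so that their $A$-spectra lie in $[0,\infty)$) is precisely the hypothesis required by \eqref{eq2.6}. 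Note that no passage to the direct sum $\mathscr{H}\oplus\mathscr{H}$ is needed here, as the whole estimate reduces to scalar inequalities applied pointwise in $x$.
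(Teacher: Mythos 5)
Your argument is essentially the paper's own: the paper likewise starts from the multi-operator mixed Schwarz bound \eqref{eq2.9} with $f(t)=t^{\alpha}$, $g(t)=t^{1-\alpha}$, raises to the $p$-th power via Jensen, pulls the exponent inside with the H\"older--McCarthy inequality \eqref{eq2.6}, and finishes with AM--GM before taking the supremum; the only cosmetic difference is that you apply AM--GM to each term before taking the $p$-th power (paying $2^{p-1}$ in the second Jensen step and recovering it from the $1/2^p$), whereas the paper keeps the geometric means and applies AM--GM at the very end. Both routes are legitimate and, done correctly, give the same constant.

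The important point is the one you flagged yourself: the constant that this argument actually produces is $\tfrac{n^{p-1}}{2}$, not the $\tfrac{1}{2n^{p-1}}$ displayed in the theorem. You are right and the paper is wrong. In the paper's proof the Jensen step is written as $\left(\sum_{i=1}^{n} a_i\right)^p \le \tfrac{1}{n^{p-1}}\sum_{i=1}^{n} a_i^p$, but convexity of $t\mapsto t^p$ gives $\left(\sum_{i=1}^{n} a_i\right)^p \le n^{p-1}\sum_{i=1}^{n} a_i^p$ --- the factor is inverted (the paper even uses the correct direction later, in deriving \eqref{eq4.7}). The stated bound is in fact false for $n\ge 2$ and $p>1$: take $A=I$, $T_1=\cdots=T_n=I$, so the left side is $n^p$ while the right side is $\tfrac{1}{2n^{p-1}}\cdot 2n = n^{2-p}$, which is smaller once $p>1$ and $n\ge 2$; with your constant $\tfrac{n^{p-1}}{2}$ one gets equality in this example. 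So your proof is the correct version of the paper's proof, and the ensuing corollary (the $n=2$ case) inherits the same correction, its constant becoming $2^{p-2}$ rather than $2^{-p}$.
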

for all $p\ge1$ and all $0\le \alpha \le 1$.
\begin{proof}
Setting $f(t)=t^{\alpha}$ and $g(t)=t^{1-\alpha}$ $(0\le \alpha\le 1)$ in the first inequality \eqref{eq2.9}, we have	
\begin{align*}
\left| {\left\langle {\left(\sum_{i=1}^{n} T_i \right)x, x} \right\rangle_A } \right| \nonumber 
&\le \sum_{i=1}^n{  \left\| { \left| T_i \right|^{2\alpha}_A x} \right\|_A\left\| { \left| {T_i^{\sharp_A} } \right|^{2\left(1-\alpha\right)}_A x} \right\|_A}  
\\
&\le   \sum_{i=1}^n{	\left\langle {\left| {T_i } \right|_A^{2\alpha } x,x} \right\rangle_A ^{\frac{1}{2}} \left\langle {\left| {T_i } \right|_A^{2\left( {1 - \alpha } \right)} x, x} \right\rangle_A ^{\frac{1}{2}}}. 
\end{align*}
It follows that
\begin{align*}
 \left| {\left\langle {\left(\sum_{i=1}^{n} T_i \right)x, x} \right\rangle_A } \right|^p \nonumber 
&\le   \left(\sum_{i=1}^n{	\left\langle {\left| {T_i } \right|_A^{2\alpha } x,x} \right\rangle_A ^{\frac{1}{2}} \left\langle {\left| {T^{\sharp_A}_i } \right|_A^{2\left( {1 - \alpha } \right)} x,x} \right\rangle_A ^{\frac{1}{2}}} \right)^p
\\
&\le \frac{1}{n^{p-1}} \sum_{i=1}^n{	\left\langle {\left| {T_i } \right|_A^{2\alpha } x,x} \right\rangle_A ^{\frac{p}{2}} \left\langle {\left| {T^{\sharp_A}_i } \right|_A^{2\left( {1 - \alpha } \right)} x,x} \right\rangle_A ^{\frac{p}{2}}}  
\\
&\le \frac{1}{n^{p-1}}  \sum_{i=1}^n{	\left\langle {\left| {T_i } \right|_A^{2p\alpha } x,x} \right\rangle_A ^{\frac{1}{2}} \left\langle {\left| {T^{\sharp_A}_i } \right|_A^{2p\left( {1 - \alpha } \right)} x,x} \right\rangle_A ^{\frac{1}{2}}} \qquad \text{(by \eqref{eq2.6})}
\\
&\le \frac{1}{n^{p-1}}   \sum_{i=1}^n{	\frac{\left\langle {\left| {T_i } \right|_A^{2p\alpha } x,x} \right\rangle_A + \left\langle {\left| {T^{\sharp_A}_i } \right|_A^{2p\left( {1 - \alpha } \right)} x,x} \right\rangle_A}{2} }  \qquad \text{(by AM-GM inequality)}
\\
&=\frac{1}{2n^{p-1}}   \sum_{i=1}^n{	 \left\langle {\left| {T_i } \right|_A^{2p\alpha } +\left| {T^{\sharp_A}_i } \right|_A^{2p\left( {1 - \alpha } \right)} x,x} \right\rangle_A  }  
\\
&=\frac{1}{2n^{p-1}} 	 \left\langle {\sum_{i=1}^n{\left(\left| {T_i } \right|_A^{2p\alpha } +\left| {T^{\sharp_A}_i } \right|_A^{2p\left( {1 - \alpha } \right)}\right)} x,x} \right\rangle_A    
\end{align*}
for all $p\ge1$, which proves the required result. 
\end{proof} 
\begin{corollary}
	Let    $T_1,T_2\in \mathscr{B}_{A} \left(\mathscr{H}\right)$, such that $AT_i=T_iA$ $(i=1,2)$. Then,	
	\begin{align}
	w^p_A\left( T_1 + T_2 \right)  \le
	\frac{1}{2^{p}}  \left\| \left| {T_1 } \right|_A^{2p\alpha } +\left| {T^{\sharp_A}_1 } \right|_A^{2p\left( {1 - \alpha } \right)} +\left| {T_2 } \right|_A^{2p\alpha } +\left| {T^{\sharp_A}_2 } \right|_A^{2p\left( {1 - \alpha } \right)} \right\|_A  
	\end{align}
for $0\le \alpha \le 1$.	
\end{corollary}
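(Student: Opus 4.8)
The plan is to obtain this as the immediate specialization of Theorem~\ref{thm7} to two operators. The hypotheses match exactly: the corollary posits $T_1,T_2\in\mathscr{B}_{A}(\mathscr{H})$ with $AT_i=T_iA$ for $i=1,2$, which is precisely the case $n=2$ of the hypothesis of Theorem~\ref{thm7}. So I would simply invoke that theorem with $n=2$ and for the same parameters $p\ge1$ and $0\le\alpha\le1$.

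Setting $n=2$ in the conclusion of Theorem~\ref{thm7}, the prefactor simplifies as $\frac{1}{2n^{p-1}}=\frac{1}{2\cdot 2^{p-1}}=\frac{1}{2^{p}}$, while the sum $\sum_{i=1}^{n}$ unfolds into the two indices $i=1,2$. Expanding this two-term sum inside the $A$-operator seminorm produces exactly the four summands
\[
\left| T_1 \right|_A^{2p\alpha} + \left| T_1^{\sharp_A} \right|_A^{2p(1-\alpha)} + \left| T_2 \right|_A^{2p\alpha} + \left| T_2^{\sharp_A} \right|_A^{2p(1-\alpha)},
\]
which is precisely the stated right-hand side. The monotonicity of $\left\|\cdot\right\|_A$ on $A$-positive operators, already used implicitly in Theorem~\ref{thm7}, ensures the norm of the combined sum dominates each contribution coherently.

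There is no genuine obstacle here, and I do not expect a hard step: all the substantive work—the chain of the H\"older inequality, the AM--GM inequality, and the H\"older--McCarty inequality \eqref{eq2.6}—is already carried out inside the proof of Theorem~\ref{thm7}. The present statement therefore requires nothing beyond the arithmetic simplification of the constant $\frac{1}{2n^{p-1}}$ at $n=2$ together with the formal expansion of the two-term sum, so the proof is a one-line specialization mirroring the style of the corollaries following Theorems~\ref{thm1} and~\ref{thm6}.
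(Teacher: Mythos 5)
Your proposal is correct and coincides with the paper's own proof, which likewise just sets $n=2$ in Theorem~\ref{thm7}; the simplification $\frac{1}{2n^{p-1}}=\frac{1}{2^{p}}$ and the expansion of the two-term sum are exactly the required arithmetic.
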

\begin{proof}
Setting $n=2$ in Theorem \ref{thm7}.
\end{proof}

\begin{theorem}
Let $B,T,C,E,S,F\in\mathscr{B}_A\left( \mathscr{H}\right)$ such that $A$ commutes with both $T$ and $S$. Then, 
\begin{align}
w_A\left(CTD+ESF\right) \le \frac{1}{2}\left\|D^{\sharp_A}  \left| T \right|_A^{2\alpha } D + C\left| {T^{\sharp_A}  } \right|_A^{2\left( {1 - \alpha } \right)} C^{\sharp_A}   + F^{\sharp_A}  \left| S \right|_A^{2\alpha } F + E\left| {S^{\sharp_A}  } \right|_A^{2\left( {1 - \alpha } \right)} E^{\sharp_A} \right\|_A \label{eq3.6}
\end{align} 
for $0\le \alpha \le 1$.
\end{theorem}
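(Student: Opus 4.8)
The plan is to bound the defining quadratic form of $w_A$ pointwise and then recognize the resulting majorant as the $A$-operator seminorm of an $A$-positive operator. (I read the six operators in the statement as $C,T,D,E,S,F$, matching the expression $CTD+ESF$.) Fix an $A$-unit vector $x\in\mathscr{H}$. The triangle inequality gives
\[
\left|\langle (CTD+ESF)x,x\rangle_A\right| \le \left|\langle CTDx,x\rangle_A\right| + \left|\langle ESFx,x\rangle_A\right|,
\]
so it suffices to estimate each summand; I carry out the computation for $\langle CTDx,x\rangle_A$, the term $\langle ESFx,x\rangle_A$ being handled identically with $(E,S,F)$ in place of $(C,T,D)$.

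First I would use the defining property of the $A$-adjoint to move the outer factors into the second slot, writing $\langle CTDx,x\rangle_A=\langle T(Dx),C^{\sharp_A}x\rangle_A$. Since $AT=TA$, the mixed Schwarz inequality in the form \eqref{eq2.3}, applied to the vectors $Dx$ and $C^{\sharp_A}x$, yields
\[
\left|\langle T(Dx),C^{\sharp_A}x\rangle_A\right|^2 \le \langle |T|_A^{2\alpha}Dx,Dx\rangle_A\,\langle |T^{\sharp_A}|_A^{2(1-\alpha)}C^{\sharp_A}x,C^{\sharp_A}x\rangle_A.
\]
Moving the outer operators back across the semi-inner product, and using $(C^{\sharp_A})^{\sharp_A}=C$ on $\overline{\mathcal{R}(A)}$, rewrites the two factors as $\langle D^{\sharp_A}|T|_A^{2\alpha}Dx,x\rangle_A$ and $\langle C|T^{\sharp_A}|_A^{2(1-\alpha)}C^{\sharp_A}x,x\rangle_A$. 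The AM--GM inequality then replaces the product of the square roots by half of their sum, so that
\[
\left|\langle CTDx,x\rangle_A\right| \le \tfrac12\langle D^{\sharp_A}|T|_A^{2\alpha}Dx,x\rangle_A + \tfrac12\langle C|T^{\sharp_A}|_A^{2(1-\alpha)}C^{\sharp_A}x,x\rangle_A.
\]

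Adding the analogous bound for $\langle ESFx,x\rangle_A$ and collecting the four terms into one quadratic form gives $\left|\langle (CTD+ESF)x,x\rangle_A\right|\le \tfrac12\langle Xx,x\rangle_A$, where $X$ is exactly the operator appearing inside the norm in \eqref{eq3.6}. Taking the supremum over all $A$-unit vectors $x$ finishes the proof, once we know that $\sup_{\|x\|_A=1}\langle Xx,x\rangle_A=\|X\|_A$. This identification is the main obstacle, and it is where the commutativity hypotheses enter: it requires $X$ to be $A$-positive, so that its $A$-numerical radius coincides with $\|X\|_A$. I would check $A$-positivity term by term. For a representative summand, $\langle D^{\sharp_A}|T|_A^{2\alpha}Dx,x\rangle_A=\langle A|T|_A^{2\alpha}Dx,Dx\rangle$, and this is nonnegative because $AT=TA$ forces $A$ to commute with $|T|_A$ (hence with $|T|_A^{2\alpha}$), making $A|T|_A^{2\alpha}$ a product of two commuting positive operators. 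The same argument applies to the $S$-terms using $AS=SA$, so $X$ is a sum of $A$-positive operators; consequently the supremum of its quadratic form equals $\|X\|_A$, which is the claimed inequality.
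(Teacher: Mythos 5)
Your argument is essentially the paper's own proof: triangle inequality, transfer of the outer factors via the $A$-adjoint, the mixed Schwarz inequality \eqref{eq2.3} applied to $Dx$ and $C^{\sharp_A}x$ (resp. $Fx$ and $E^{\sharp_A}x$), the AM--GM step, and then the supremum over $A$-unit vectors. The only divergence is your closing worry about $A$-positivity of the majorant $X$, which is unnecessary: the inequality only needs $\sup_{\left\| x \right\|_A=1}\left\langle Xx,x\right\rangle_A = w_A\left(X\right) \le \left\|X\right\|_A$, which holds for any $A$-bounded operator by the general relation between the $A$-numerical radius and the $A$-operator seminorm quoted in the introduction, so no commutation argument is required at that stage.
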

\begin{proof}
Employing the trianlge inequality, the mixed Schwarz inequality \eqref{eq2.3}, and then the AM-GM inequality, it follows that  
\begin{align*}
&\left| {\left\langle {\left( {CTD + ESF} \right)x,x} \right\rangle _A } \right| 
\\
&\le \left| {\left\langle {CTDx,x} \right\rangle _A } \right| + \left| {\left\langle {ESFx,x} \right\rangle _A } \right| 
\\ 
&= \left| {\left\langle {TDx,C^{\sharp_A}  x} \right\rangle _A } \right| + \left| {\left\langle {SFx,E^{\sharp_A}  x} \right\rangle _A } \right| 
\\ 
&\le \left\langle {\left| T \right|_A^{2\alpha } Dx,Dx} \right\rangle _A^{\frac{1}{2}} \left\langle {\left| {T^{\sharp_A}  } \right|_A^{2\left( {1 - \alpha } \right)} C^{\sharp_A}  x,C^{\sharp_A}  x} \right\rangle _A^{\frac{1}{2}}  
\\
&\qquad+ \left\langle {\left| S \right|_A^{2\alpha } Fx,Fx} \right\rangle _A^{\frac{1}{2}} \left\langle {\left| {S^{\sharp_A}  } \right|_A^{2\left( {1 - \alpha } \right)} E^{\sharp_A}  x,E^{\sharp_A}  x} \right\rangle _A^{\frac{1}{2}}  
\\ 
&\le \frac{1}{2}\left[ {\left\langle {\left| T \right|_A^{2\alpha } Dx,Dx} \right\rangle _A  + \left\langle {\left| {T^{\sharp_A}  } \right|_A^{2\left( {1 - \alpha } \right)} C^{\sharp_A}  x,C^{\sharp_A}  x} \right\rangle _A  }\right.
\\
&\qquad\qquad\left.{+ \left\langle {\left| S \right|_A^{2\alpha } Fx,Fx} \right\rangle _A  + \left\langle {\left| {S^{\sharp_A}  } \right|_A^{2\left( {1 - \alpha } \right)} E^{\sharp_A}  x,E^{\sharp_A}  x} \right\rangle _A } \right] 
\\ 
&= \frac{1}{2}\left[ {\left\langle {D^{\sharp_A}  \left| T \right|_A^{2\alpha } Dx,x} \right\rangle _A  + \left\langle {C\left| {T^{\sharp_A}  } \right|_A^{2\left( {1 - \alpha } \right)} C^{\sharp_A}  x,x} \right\rangle _A  }\right.
\\
&\qquad\qquad\left.{+ \left\langle {F^{\sharp_A}  \left| S \right|_A^{2\alpha } Fx,x} \right\rangle _A  + \left\langle {E\left| {S^{\sharp_A}  } \right|_A^{2\left( {1 - \alpha } \right)} E^{\sharp_A}  x,x} \right\rangle _A } \right] 
\\ 
&= \frac{1}{2}\left\langle {\left( {D^{\sharp_A}  \left| T \right|_A^{2\alpha } D + C\left| {T^{\sharp_A}  } \right|_A^{2\left( {1 - \alpha } \right)} C^{\sharp_A}   + F^{\sharp_A}  \left| S \right|_A^{2\alpha } F + E\left| {S^{\sharp_A}  } \right|_A^{2\left( {1 - \alpha } \right)} E^{\sharp_A}  } \right)x,x} \right\rangle _A.  
\end{align*}
Taking the
supremum over all $A$-unit vector   $x\in \mathscr{H}$ we get the
desired result.
\end{proof}
Inequality \eqref{eq3.6} yields several numerical radius inequalities as special
cases. A sample of elementary inequalities is demonstrated in the following
remarks.
\begin{remark}
Letting $T = I$ (the identity operator) and $S = 0$ in Theorem
2, it follows that $\left| I \right|^2_A=\left| T \right|^2_A=AT^{\sharp_A} T=AI=A$, we obtain the inequality
\begin{align}
w_A\left(CD\right) \le \frac{1}{2}\left\|D^{\sharp_A}  A^{\alpha } D + CA^{1 - \alpha } C^{\sharp_A}    \right\|_A.
\end{align} 
\end{remark}
\begin{remark}
Letting $C=D=E=F = I$ in Theorem 2,   we obtain the inequality
\begin{align}
w_A\left(T+S\right) \le \frac{1}{2}\left\|   \left| T \right|_A^{2\alpha }   +  \left| {T^{\sharp_A}  } \right|_A^{2\left( {1 - \alpha } \right)}    +    \left| S \right|_A^{2\alpha }  +  \left| {S^{\sharp_A}  } \right|_A^{2\left( {1 - \alpha } \right)}   \right\|_A.
\end{align}  
\end{remark}

\begin{remark}
Letting $T = S = I$, $E = D$, and $ F= \pm C$ in Theorem 2, we
obtain the inequality	
\begin{align}
w_A\left(CD\pm DC\right) \le \frac{1}{2}\left\| C^{\sharp_A}   A^{ \alpha } C +C A^{ 1 - \alpha  } C^{\sharp_A}   + D^{\sharp_A}  A^{ \alpha } D+ D A^{ 1 - \alpha } D^{\sharp_A} \right\|_A,
\end{align} 
which gives an estimate for the numerical radius of the commutator	$CD\pm DC$.
\end{remark}

\section{Generalized Euclidean $A$-numerical radius inequalities}

In 2018,  Baklouti \etal \cite{BFA} introduced the concept of Euclidean operator $A$-radius of an $n$-tuple $ {\bf{T}} = \left( {T_1 , \cdots ,T_n } \right) \in  \mathscr{B}\left(\mathscr{H}\right)^{n}:=\mathscr{B}\left(\mathscr{H}\right)\times \cdots \times \mathscr{B}\left(\mathscr{H}\right)$. Namely, for $T_1,\cdots,T_n \in \mathscr{B}\left(\mathscr{H}\right)$.
\begin{align*} 
w_{\rm{e},A} \left( {T_1 , \cdots ,T_n } \right): = \mathop {\sup }\limits_{\left\| x \right\| = 1} \left( {\sum\limits_{i = 1}^n {\left| {\left\langle {T_i x,x} \right\rangle_A } \right|^2 } } \right)^{1/2}.
\end{align*}
In the same work, the authors proved that
\begin{align}
\label{eq4.1}\frac{1}{{2\sqrt n }}\left\| {\sum\limits_{k = 1}^n {T_k T_k^{\sharp_A} } } \right\|^{\frac{1}{2}}  \le w_{A} \left( {T_1 , \cdots ,T_n } \right) \le \left\| {\sum\limits_{k = 1}^n {T_k T_k^{\sharp_A} } } \right\|^{\frac{1}{2}}. 
\end{align}
As a direct consequence of \eqref{eq4.1}; if $T = B + iC$ is the $A$-Cartesian decomposition of $A$, then
\begin{align*}
w_{\rm{e},A}^2 \left( {B,C} \right) = \mathop {\sup }\limits_{\left\| x \right\|_A = 1} \left\{ {\left| {\left\langle {Bx,x} \right\rangle_A } \right|^2  + \left| {\left\langle {Cx,x} \right\rangle_A } \right|^2 } \right\} = \mathop {\sup }\limits_{\left\| x \right\|_A = 1} \left| {\left\langle {Tx,x} \right\rangle_A } \right|^2  = w_A^2 \left( T \right).
\end{align*}
But since $T^{\sharp_A} T + TT^{\sharp_A}  = 2\left( {B^2  + C^2 } \right)$, then we obtain
\begin{align}
\frac{1}{16}\|T^{\sharp_A}T+TT^{\sharp_A}\|_A\le  w_{A}^2\left(T\right) \le \frac{1}{2}\|T^{\sharp_A}T+TT^{\sharp_A}\|_A.   \label{eq4.2}
\end{align}
 This inequality were improved by Bhunia \etal in \cite{BKP2} for $T\in \mathscr{B}_{A^{1/2}}\left(\mathscr{H}\right)$ and independently generalized by Feki in \cite{Kais2} for $T\in\mathscr{B}_A\left(\mathscr{H}\right)$, where they proved that
 \begin{align}
 \frac{1}{4} \left\|TT^{\sharp_A}+T^{\sharp_A}T\right\|_A\le w^2_A\left(T\right)  \le  \frac{1}{2} \left\|TT^{\sharp_A}+T^{\sharp_A}T\right\|_A. \label{eq4.3}
 \end{align}
The sharpness of \eqref{eq4.3} can be found in \cite{Kais2}. It should be noted that the inequality \eqref{eq4.3} is a kind generaliztion of Kittaneh inequality \cite{FK1} 
 \begin{align*}
 \frac{1}{4}\|T^*T+TT^*\|\le  w^2\left(T\right) \le \frac{1}{2}\|T^*T+TT^*\|  
 \end{align*}
 for Hilbert space operator $T\in \mathscr{B}\left(\mathscr{H}\right)$. These inequalities are sharp. 
 
 Let $T=B+iC$ be the $A$-Cartesian decomposition, then $B$ and $C$ are $A$-selfadjoint and $T^{\sharp_A}T+TT^{\sharp_A}=2 \left(B^2+C^2\right)$ . Therefore, \eqref{eq4.3} can be reformulated as 
 \begin{align*}
 \frac{1}{2} \left\|B^2+C^2\right\|_A \le w^2_A\left(T\right) \le \left\|B^2+C^2\right\|_A.
 \end{align*}
 or equivalently as
  \begin{align}
 \frac{1}{4} \left\|\left(B+C\right)^2+\left(B-C\right)^2\right\|_A \le w^2_A\left(T\right) \le  \frac{1}{2} \left\|\left(B+C\right)^2+\left(B-C\right)^2\right\|_A. \label{eq4.4}
 \end{align}
 
The purpose of this section is to generalize the Euclidean operator $A$-radius for $n$-tuple $n$-tuple $ {\bf{T}} = \left( {T_1 , \cdots ,T_n } \right) \in  \mathscr{B}\left(\mathscr{H}\right)^{n}:=\mathscr{B}\left(\mathscr{H}\right)\times \cdots \times \mathscr{B}\left(\mathscr{H}\right)$, which  extends \eqref{eq4.3} and also improves \eqref{eq4.1}. In lighting of the above $A$-Cartesian decomposition of the inequality \eqref{eq4.2}, a generalization of the inequality \eqref{eq4.4} is given as well.\\

We may start this section with the following result.
\begin{theorem}
Let $T\in \mathscr{B}_A\left(\mathscr{H}\right)$ such that $AT=TA$, with the $A$-Cartesian decomposition $T =
B + iC$, $0\le \alpha\le 1$, and $r\ge1$. Then	
\begin{align}
w^r_A \left(T\right)\le \frac{1}{2}\left\|\left| B \right|^{2r\alpha}_A +\left| {B^{\sharp_A } } \right|^{2r\left(1-\alpha\right)}_A+\left| C \right|^{2r\alpha}_A +\left| {C^{\sharp_A } } \right|^{2r\left(1-\alpha\right)}_A\right\|_A. 
\end{align}
\end{theorem}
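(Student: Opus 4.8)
The plan is to deduce the inequality from the $A$-Cartesian form of the mixed Schwarz inequality \eqref{eq2.11} and then to force an $r$-th power through the resulting scalar estimate using the Hölder--McCarthy inequality \eqref{eq2.6} together with convexity. Fix an $A$-unit vector $x\in\mathscr{H}$ and specialise \eqref{eq2.11} to $P=B$, $Q=C$, $y=x$, and $f(t)=t^{\alpha}$, $g(t)=t^{1-\alpha}$. Because $AT=TA$ makes $A$ commute with $B=\re_A(T)$ and $C=\im_A(T)$, and hence with the (ordinary-positive) operators $|B|_A,|C|_A,|B^{\sharp_A}|_A,|C^{\sharp_A}|_A$, each $A$-seminorm there may be rewritten as a semi-inner product via $\|\,|B|_A^{\alpha}x\|_A^2=\langle|B|_A^{2\alpha}x,x\rangle_A$, and likewise for the other three factors. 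This produces
\begin{align*}
\left|\left\langle Tx,x\right\rangle_A\right|\le\left\langle\left|B\right|_A^{2\alpha}x,x\right\rangle_A^{1/2}\left\langle\left|B^{\sharp_A}\right|_A^{2(1-\alpha)}x,x\right\rangle_A^{1/2}+\left\langle\left|C\right|_A^{2\alpha}x,x\right\rangle_A^{1/2}\left\langle\left|C^{\sharp_A}\right|_A^{2(1-\alpha)}x,x\right\rangle_A^{1/2}.
\end{align*}

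Next I would raise both sides to the power $r\ge1$ and process the right-hand side factor by factor. On each geometric-mean product I would apply the power-mean inequality to absorb the two exponents $1/2$ into an $r$-th power mean, and then invoke \eqref{eq2.6} to pass from $\langle|B|_A^{2\alpha}x,x\rangle_A^{r}$ to $\langle|B|_A^{2r\alpha}x,x\rangle_A$ (and similarly for the remaining three quantities, each of which is of the form $\langle Sx,x\rangle_A^{r}\le\langle S^{r}x,x\rangle_A$ for an $A$-positive $S$). A concluding arithmetic--geometric mean step symmetrises the four terms into
\begin{align*}
\left\langle\left(\left|B\right|_A^{2r\alpha}+\left|B^{\sharp_A}\right|_A^{2r(1-\alpha)}+\left|C\right|_A^{2r\alpha}+\left|C^{\sharp_A}\right|_A^{2r(1-\alpha)}\right)x,x\right\rangle_A,
\end{align*}
and taking the supremum over all $A$-unit vectors $x$ then yields the asserted $A$-operator norm bound.

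The step I expect to be the main obstacle is the passage of the outer $r$-th power across the two-term Cartesian sum while retaining the constant $\tfrac12$. In the single-operator case only one geometric-mean term is present, so the power-mean step is lossless and the factor $\tfrac12$ drops out automatically; here, distributing $(u+v)^{r}$ over the $B$- and $C$-contributions through the naive estimate $(u+v)^{r}\le 2^{r-1}(u^{r}+v^{r})$ introduces an extra power of $2$ that must be reconciled with the final averaging. For $r=1$ the constant $\tfrac12$ is already delivered by Corollary \ref{cor4}; for $r>1$ I would arrange the computation so that the two Cartesian terms are kept under a single power-mean rather than split prematurely, and track the constant carefully through the \eqref{eq2.6} and arithmetic--geometric mean steps to confirm that it collapses to $\tfrac12$.
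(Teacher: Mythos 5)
Your overall toolkit (mixed Schwarz, H\"{o}lder--McCarty \eqref{eq2.6}, AM--GM, supremum over $A$-unit vectors) is the same as the paper's, but the order in which you deploy it leaves a genuine gap exactly where you suspect one: the constant. Starting from the triangle-inequality form \eqref{eq2.11} with $y=x$, write $u$ and $v$ for the two geometric-mean terms coming from $B$ and $C$. Raising $\left|\left\langle Tx,x\right\rangle_A\right|\le u+v$ to the power $r$ forces $(u+v)^r\le 2^{r-1}\left(u^r+v^r\right)$, and then AM--GM plus \eqref{eq2.6} applied to each of $u^r$ and $v^r$ contributes a further factor $\tfrac12$, for a total constant $2^{r-2}$. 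This equals $\tfrac12$ only at $r=1$; for $r>1$ your route proves the statement only with the weaker constant $2^{r-2}$, and no rearrangement of the power-mean steps \emph{after} the triangle inequality will recover $\tfrac12$, because the loss is incurred the moment you split $\left|\left\langle Tx,x\right\rangle_A\right|$ into two nonnegative summands and then take an $r$-th power of the sum.

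The paper avoids this by never invoking the triangle inequality. Since $B$ and $C$ are $A$-selfadjoint, $b=\left\langle Bx,x\right\rangle_A$ and $c=\left\langle Cx,x\right\rangle_A$ are real and $\left|\left\langle Tx,x\right\rangle_A\right|=\left(b^2+c^2\right)^{1/2}$ exactly. The paper then replaces the $\ell^2$-combination $\left(b^2+c^2\right)^{1/2}$ by the $\ell^r$-combination $\left(|b|^r+|c|^r\right)^{1/r}$, bounds $|b|^r$ and $|c|^r$ separately via \eqref{eq2.3}, and performs a single four-term AM--GM \emph{inside} the outer $(\cdot)^{1/r}$; this produces exactly one factor $2^{-1/r}$, i.e. $\tfrac12$ after raising to the $r$-th power, and \eqref{eq2.6} then removes the remaining outer exponents. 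Two cautions if you adopt this: first, everything must be kept under one $(\cdot)^{1/r}$ until the very end; second, the paper writes $\left(b^2+c^2\right)^{1/2}=\left(|b|^r+|c|^r\right)^{1/r}$ as an equality, whereas by monotonicity of $\ell^p$-norms the needed inequality $\left(b^2+c^2\right)^{1/2}\le\left(|b|^r+|c|^r\right)^{1/r}$ holds only for $1\le r\le 2$ and reverses for $r>2$. So your instinct that the constant is the crux is sound --- and the paper's own treatment of that very step is itself only justified for $1\le r\le 2$ as written.
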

\begin{proof}
	Since we have
\begin{align*}
\left| {\left\langle {T x,x} \right\rangle_A } \right|  &= \left( {\left\langle {B x,x} \right\rangle_A ^2  + \left\langle {C x,x} \right\rangle_A ^2 } \right)^{1/2}  
\\ 
&= \left( {\left|\left\langle {B x,x} \right\rangle_A\right| ^r  +\left| \left\langle {C x,x} \right\rangle_A\right| ^r } \right)^{\frac{1}{r}}
\\
&\le \left(\left\langle {\left| B \right|^{2\alpha}_A x,x} \right\rangle^{\frac{r}{2}}_A \left\langle {\left| {B^{\sharp_A } } \right|^{2\left(1-\alpha\right)}_A x,x} \right\rangle^{\frac{r}{2}}_A+\left\langle {\left| C \right|^{2\alpha}_A x,x} \right\rangle^{\frac{r}{2}}_A \left\langle {\left| {C^{\sharp_A } } \right|^{2\left(1-\alpha\right)}_A x,x} \right\rangle^{\frac{r}{2}}_A  \right)^\frac{1}{r} 
\\
&\le \frac{1}{2^{\frac{1}{r}}}\left(\left\langle {\left| B \right|^{2\alpha}_A x,x} \right\rangle^{r}_A +\left\langle {\left| {B^{\sharp_A } } \right|^{2\left(1-\alpha\right)}_A x,x} \right\rangle^{r}_A+\left\langle {\left| C \right|^{2\alpha}_A x,x} \right\rangle^{r}_A +\left\langle {\left| {C^{\sharp_A } } \right|^{2\left(1-\alpha\right)}_A x,x} \right\rangle^{r}_A \right)^{\frac{1}{r}}
\\
&\le \frac{1}{2^{\frac{1}{r}}}\left(\left\langle {\left| B \right|^{2r\alpha}_A x,x} \right\rangle_A +\left\langle {\left| {B^{\sharp_A } } \right|^{2r\left(1-\alpha\right)}_A x,x} \right\rangle_A+\left\langle {\left| C \right|^{2r\alpha}_A x,x} \right\rangle_A +\left\langle {\left| {C^{\sharp_A } } \right|^{2r\left(1-\alpha\right)}_A x,x} \right\rangle_A \right)^{\frac{1}{r}}
\\
&= \frac{1}{2^{\frac{1}{r}}}\left\langle {\left[\left| B \right|^{2r\alpha}_A +\left| {B^{\sharp_A } } \right|^{2r\left(1-\alpha\right)}_A+\left| C \right|^{2r\alpha}_A +\left| {C^{\sharp_A } } \right|^{2r\left(1-\alpha\right)}_A\right]x,x} \right\rangle^{\frac{1}{r}}_A, 
\end{align*}
which implies that
\begin{align*}
\left| {\left\langle {T x,x} \right\rangle_A } \right|^r\le \frac{1}{2}\left\langle {\left[\left| B \right|^{2r\alpha}_A +\left| {B^{\sharp_A } } \right|^{2r\left(1-\alpha\right)}_A+\left| C \right|^{2r\alpha}_A +\left| {C^{\sharp_A } } \right|^{2r\left(1-\alpha\right)}_A\right]x,x} \right\rangle_A.
\end{align*}
Taking the supremum over all $A$-unit vector $x\in \mathscr{H}$, we get the required result. 
\end{proof}

The generalized Euclidean operator $A$-radius of $T_1,\cdots,T_n $  would be defined as
\begin{align*} 
w_{p,A} \left( {T_1 , \cdots ,T_n } \right): = \mathop {\sup }\limits_{\left\| x \right\| = 1} \left( {\sum\limits_{i = 1}^n {\left| {\left\langle {T_i x,x} \right\rangle_A } \right|^p } } \right)^{1/p}, \qquad  p\ge1.
\end{align*}
This generalizes  the concepts of Euclidean operator radius of an $n$-tuple considered by Baklouti \etal \cite{BFA}.  If $p=1$ then $w_{1,A} \left( {T_1 , \cdots ,T_n } \right)$ (also, it is denoted by $w_{R,A} \left( {T_1 , \cdots ,T_n } \right)$) is called the
Rhombic $A$-numerical radius which have been studied in \cite{BAJMAEH} but for operators in $\mathscr{B} \left(\mathscr{H}\right)$. In an interesting case, $w_{1,A} \left( {C , \cdots ,C } \right)=n \cdot w_A \left( {C} \right)$.

The $A$-Crawford number is defined to be 
\begin{align*}
c_A\left( T \right) = \inf \left\{ {\left| \lambda\right|:\lambda
	\in W_A\left( T \right) } \right\} = \mathop {\inf }\limits_{\left\|
	x \right\|_A = 1} \left| {\left\langle {Tx,x} \right\rangle_A }
\right|.
\end{align*}
Consequently, we define the generalized $A$-Crawford number as:
\begin{align*} 
c_{p,A} \left( {T_1 , \cdots ,T_n } \right): = \mathop {\inf }\limits_{\left\| x \right\| = 1} \left( {\sum\limits_{i = 1}^n {\left| {\left\langle {T_i x,x} \right\rangle_A } \right|^p } } \right)^{1/p}, \qquad  p\ge1.
\end{align*}
In case $p=1$, the generalized Crawford number is called the Rhombic $A$-Crawford number and is denoted by $c_{R,A} \left( {T_1 , \cdots ,T_n } \right)$.

We note that in case $p=\infty$,  the generalized Euclidean operator radius is defined as:
\begin{align*} 
w_{\infty,A} \left( {T_1 , \cdots ,T_n } \right)&:= \mathop {\sup }\limits_{\left\| x \right\|_A = 1}  \sum\limits_{i = 1}^n {\left| {\left\langle {T_i x,x} \right\rangle_A } \right|  }  -\mathop {\inf }\limits_{\left\| x \right\| _A= 1}  \sum\limits_{i = 1}^n {\left| {\left\langle {T_i x,x} \right\rangle_A } \right|  }
\\
&=w_{R,A} \left( {T_1 , \cdots ,T_n } \right) -c_{R,A} \left( {T_1 , \cdots ,T_n } \right).
\end{align*}
Thus, the inequality
\begin{align}
\label{eq4.6} w_{\infty,A}  \left( {T_1 , \cdots ,T_n } \right) \le w_{p,A} \left( {T_1 , \cdots ,T_n } \right) \le w_{R,A} \left( {T_1 , \cdots ,T_n } \right)
\end{align}
for all $p\in \left(1,\infty\right)$. This fact follows by Jensen's inequality applied for the function $h(p) = w_{p,A} \left( {T_1 , \cdots ,T_n } \right)$, which is log-convex and decreasing for all $p > 1$.

On the other hand, 
by employing the Jensen's inequality
\begin{align*}
\left( {\frac{1}{n}\sum\limits_{k = 1}^n {a_k } } \right)^p \le \frac{1}{n}\sum\limits_{k = 1}^n {a^p_k  },
\end{align*} 
which holds for every finite positive sequence of real numbers $\left(a_k\right)_{k=1}^n$ and  $p\ge1$; by setting 
$a_k  = \left| {\left\langle {T_k x,x} \right\rangle_A } \right|$ for all $(k=1,2,\cdots,n)$, we get
\begin{align*}
\sum\limits_{k = 1}^n {\left| {\left\langle {T_k x,x} \right\rangle_A } \right|}  \le n^{1 - \frac{1}{p}} \left( {\sum\limits_{k = 1}^n {\left| {\left\langle {T_k x,x} \right\rangle_A } \right|^p } } \right)^{\frac{1}{p}}. 
\end{align*}
Taking the supremum over all $A$-unit vector $x\in \mathscr{H}$, one could get
\begin{align}
 w_{R,A} \left( {T_1 , \cdots ,T_n } \right) \le n^{1 - \frac{1}{p}} w_{p,A} \left( {T_1 , \cdots ,T_n } \right).\label{eq4.7}
\end{align}
Combining the inequalities \eqref{eq4.6} and \eqref{eq4.7} we get	
\begin{align}
w_{\infty,A} \left( {T_1 , \cdots ,T_n } \right) \le w_{p,A} \left( {T_1 , \cdots ,T_n } \right)\le w_{R,A} \left( {T_1 , \cdots ,T_n } \right) \le n^{1 - \frac{1}{p}} w_{p,A} \left( {T_1 , \cdots ,T_n } \right).\label{eq4.8} 
\end{align}	
More generally, in  the power mean inequality
\begin{align*}
\left( {\frac{1}{n}\sum\limits_{k = 1}^n {a_k^p } } \right)^{\frac{1}{p}}  \le \left( {\frac{1}{n}\sum\limits_{k = 1}^n {a_k^q } } \right)^{\frac{1}{q}}, \qquad \forall p\le q
\end{align*}
if one	chooses $a_k  = \left| {\left\langle {T_k x,x} \right\rangle_A } \right|$ for all $(k=1,2,\cdots,n)$, then we have
\begin{align*}
\left( {\frac{1}{n}\sum\limits_{k = 1}^n {\left| {\left\langle {T_k x,x} \right\rangle_A } \right|^p } } \right)^{\frac{1}{p}}  \le \left( {\frac{1}{n}\sum\limits_{k = 1}^n {\left| {\left\langle {T_k x,x} \right\rangle_A } \right|^q } } \right)^{\frac{1}{q}}. 
\end{align*}
Taking the supremum over all $A$-unit vector $x\in \mathscr{H}$, we  get	
\begin{align}
\label{eq4.9} w_{p,A} \left( {T_1 , \cdots ,T_n } \right) \le n^{\frac{1}{p} - \frac{1}{q}} w_{q,A} \left( {T_1 , \cdots ,T_n } \right), \qquad \forall q\ge p \ge 1.
\end{align}
Indeed, one can refine \eqref{eq4.8} by applying the Jensen's inequality
\begin{align}
 \left( {\frac{1}{n}\sum\limits_{k = 1}^n {a_k } } \right)^p  \le \frac{1}{n}\sum\limits_{k = 1}^n {a_k^p }  - \frac{1}{n}\sum\limits_{k = 1}^n {\left| {a_k  - \frac{1}{n}\sum\limits_{j = 1}^n {a_j } } \right|^p } \qquad p\ge2\label{eq4.10}
\end{align}
which obtained from more general result for superquadratic functions \cite{SJS}. 

Thus, by setting $a_k  = \left| {\left\langle {T_k x,x} \right\rangle_A } \right|$ in \eqref{eq4.10} we get
\begin{align*}
\left( {\sum\limits_{k = 1}^n {\left| {\left\langle {T_k x,x} \right\rangle_A } \right|} } \right)^p  &\le n^{p - 1} \sum\limits_{k = 1}^n {\left| {\left\langle {T_k x,x} \right\rangle_A } \right|^p }  - n^{p - 1} \sum\limits_{k = 1}^n {\left| {\left| {\left\langle {T_k x,x} \right\rangle_A } \right| - \frac{1}{n}\sum\limits_{j = 1}^n {\left| {\left\langle {T_j x,x} \right\rangle_A } \right|} } \right|^p } 
\\
&\le n^{p - 1} \sum\limits_{k = 1}^n {\left| {\left\langle {T_k x,x} \right\rangle_A } \right|^p }  - n^{p - 1} \sum\limits_{k = 1}^n {\left| {\left| {\left\langle {T_k x,x} \right\rangle_A } \right| - \frac{1}{n}\mathop {\sup }\limits_{\left\| x \right\| = 1} \sum\limits_{j = 1}^n {\left| {\left\langle {T_j x,x} \right\rangle_A } \right|} } \right|^p}. 
\end{align*}
Taking the supremum again over all $A$-unit vector $x\in \mathscr{H}$, we  get
\begin{align*}
&\mathop {\sup }\limits_{\left\| x \right\|_A = 1} \left( {\sum\limits_{k = 1}^n {\left| {\left\langle {T_k x,x} \right\rangle_A } \right|} } \right)^p \\ &\le \mathop {\sup }\limits_{\left\| x \right\|_A = 1} \left\{ {n^{p - 1} \sum\limits_{k = 1}^n {\left| {\left\langle {T_k x,x} \right\rangle_A } \right|^p }  - n^{p - 1} \sum\limits_{k = 1}^n {\left| {\left| {\left\langle {T_k x,x} \right\rangle_A } \right| - \frac{1}{n}\mathop {\sup }\limits_{\left\| x \right\|_A = 1} \sum\limits_{j = 1}^n {\left| {\left\langle {T_j x,x} \right\rangle_A } \right|} } \right|^p } } \right\} 
\\ 
&\le n^{p - 1} \mathop {\sup }\limits_{\left\| x \right\|_A = 1} \sum\limits_{k = 1}^n {\left| {\left\langle {T_k x,x} \right\rangle_A } \right|^p }  - n^{p - 1} \mathop {\inf }\limits_{\left\| x \right\|_A = 1} \sum\limits_{k = 1}^n {\left| {\left| {\left\langle {T_k x,x} \right\rangle_A } \right| - \frac{1}{n}\mathop {\sup }\limits_{\left\| x \right\|_A = 1} \sum\limits_{j = 1}^n {\left| {\left\langle {T_j x,x} \right\rangle_A } \right|} } \right|^p }  
\\ 
&= n^{p - 1} w_{p,A}^p \left( {T_1 , \cdots ,T_n } \right) - n^{p - 1} \mathop {\inf }\limits_{\left\| x \right\|_A = 1} \sum\limits_{k = 1}^n {\left| {\left| {\left\langle {T_k x,x} \right\rangle_A } \right| - \frac{1}{n}w_{R,A} \left( {T_1 , \cdots ,T_n } \right)} \right|^p },
\end{align*}
which gives
\begin{align*}
w_{R,A}^p \left( {T_1 , \cdots ,T_n } \right) 
\le n^{p - 1} w_{p,A}^p \left( {T_1 , \cdots ,T_n } \right) - n^{p - 1} \mathop {\inf }\limits_{\left\| x \right\|A_ = 1} \sum\limits_{k = 1}^n {\left| {\left| {\left\langle {T_k x,x} \right\rangle_A } \right| - \frac{1}{n}w_{R,A} \left( {T_1 , \cdots ,T_n } \right)} \right|^p }. 
\end{align*} 
which refine the right hand side of \eqref{eq4.8}. Clearly, all  above mentioned inequalities generalize and refine some inequalities obtained in \cite{MSS}. 
For recent inequalities, counterparts, refinements and other related properties concerning  the generalized Euclidean operator radius 
the reader my refer to \cite{BAJMAEH}, \cite{D4} ,\cite{HLB1},\cite{HLB2}, \cite{SMY}, \cite{SMS}, and \cite{P}.

Next, we give a generalization of \eqref{eq4.3} and refine (indeed improve) \eqref{eq4.2} (and thus \eqref{eq4.1}) to the generalized Euclidean operator radius.

\begin{theorem}
	Let $T_k\in \mathscr{B}_A\left(\mathscr{H}\right)$ $(k=1,\cdots,n)$. Then	
	\begin{align}
	\label{eq4.11} \frac{1}{{2^{p+1}n^{p - 1} }}\left\| {\sum\limits_{k = 1}^n {T_k^{\sharp_A} T_k  + T_k T_k^{\sharp_A} } } \right\|_A^p \le w^p_{2p,A} \left( {T_1 , \cdots ,T_n } \right) \le \frac{1}{{2^p }}\left\| {\sum\limits_{k = 1}^n {\left( {T_k^{\sharp_A} T_k  + T_k T_k^{\sharp_A} } \right)^p } } \right\|_A
	\end{align}
	for all $p\ge1$.
\end{theorem}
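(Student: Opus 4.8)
The plan is to reduce both estimates to a single vectorwise inequality coming from the $A$-Cartesian decomposition, and then to handle the two directions by a convexity argument and a power--mean argument respectively. For $k=1,\dots,n$ set $M_k:=\tfrac12\bigl(T_k^{\sharp_A}T_k+T_kT_k^{\sharp_A}\bigr)$; each $M_k$ is $A$-selfadjoint and $A$-positive, since $\left\langle M_kx,x\right\rangle_A=\tfrac12\bigl(\|T_kx\|_A^2+\|T_k^{\sharp_A}x\|_A^2\bigr)\ge0$. Writing the $A$-Cartesian decomposition $T_k=\re_A(T_k)+i\,\im_A(T_k)$, a direct multiplication gives the operator identity $\re_A(T_k)^2+\im_A(T_k)^2=M_k$. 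Because $\re_A(T_k)$ and $\im_A(T_k)$ are $A$-selfadjoint, the Cauchy--Schwarz inequality for the semi-inner product yields $\left\langle \re_A(T_k)x,x\right\rangle_A^2\le\left\langle \re_A(T_k)^2x,x\right\rangle_A$ (and similarly for $\im_A$) for every $A$-unit vector $x$, whence
\begin{align*}
\left|\left\langle T_kx,x\right\rangle_A\right|^2=\left\langle \re_A(T_k)x,x\right\rangle_A^2+\left\langle \im_A(T_k)x,x\right\rangle_A^2\le\left\langle M_kx,x\right\rangle_A .
\end{align*}
Everything rests on this inequality.

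For the upper bound I would raise this to the power $p$ and apply the H\"{o}lder--McCarthy inequality \eqref{eq2.6} to the $A$-positive operator $M_k$, obtaining $\left|\left\langle T_kx,x\right\rangle_A\right|^{2p}\le\left\langle M_kx,x\right\rangle_A^{p}\le\left\langle M_k^{p}x,x\right\rangle_A$ for each $A$-unit vector $x$. Summing over $k$ and bounding $\bigl\langle\sum_kM_k^{p}x,x\bigr\rangle_A\le\bigl\|\sum_kM_k^{p}\bigr\|_A$, then taking the supremum over all $A$-unit vectors, gives $\sup_{\|x\|_A=1}\sum_k\left|\left\langle T_kx,x\right\rangle_A\right|^{2p}\le\bigl\|\sum_kM_k^{p}\bigr\|_A$; since $\bigl(T_k^{\sharp_A}T_k+T_kT_k^{\sharp_A}\bigr)^{p}=2^{p}M_k^{p}$, the right-hand side equals $\tfrac{1}{2^{p}}\bigl\|\sum_k\bigl(T_k^{\sharp_A}T_k+T_kT_k^{\sharp_A}\bigr)^{p}\bigr\|_A$, which is the asserted upper bound (controlling the $2p$-th power of $w_{2p,A}$).

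For the lower bound I would first apply the convexity of $t\mapsto t^{p}$ $(p\ge1)$ in the form $\sum_ka_k^{p}\ge n^{1-p}\bigl(\sum_ka_k\bigr)^{p}$ with $a_k=\left|\left\langle T_kx,x\right\rangle_A\right|^2$ --- the same Jensen device already used in this section --- to get $\sum_k\left|\left\langle T_kx,x\right\rangle_A\right|^{2p}\ge n^{1-p}\bigl(\sum_k\left|\left\langle T_kx,x\right\rangle_A\right|^2\bigr)^{p}$. Taking the supremum over $A$-unit vectors reduces the problem to a base estimate for the ordinary Euclidean $A$-radius, i.e.\ to bounding $\sup_{\|x\|_A=1}\sum_k\left|\left\langle T_kx,x\right\rangle_A\right|^2$ below by a multiple of $\bigl\|\sum_kM_k\bigr\|_A$. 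For that I would imitate the single-operator argument behind \eqref{eq4.3}: for each $k$ one has $w_A(T_k)\ge\|\re_A(T_k)\|_A$ and $w_A(T_k)\ge\|\im_A(T_k)\|_A$, and combining these with the identity $\sum_kM_k=\sum_k\bigl(\re_A(T_k)^2+\im_A(T_k)^2\bigr)$ should produce a lower bound of the shape $c\,\bigl\|\sum_kM_k\bigr\|_A$, after which the factor $n^{1-p}$ and the relation $\sum_kM_k=\tfrac12\sum_k\bigl(T_k^{\sharp_A}T_k+T_kT_k^{\sharp_A}\bigr)$ recast the bound in the announced form.

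I expect the lower bound, and specifically the constant, to be the main obstacle. The convexity step is sharp only up to the factor $n^{1-p}$, and the base estimate does not simply inherit the constant of the single-operator inequality \eqref{eq4.3}: for $n\ge2$ the several $\left\langle T_kx,x\right\rangle_A$ need not be simultaneously large at one vector $x$, so $\sup_{\|x\|_A=1}\sum_k\left|\left\langle T_kx,x\right\rangle_A\right|^2$ can be strictly below $\tfrac12\bigl\|\sum_kM_k\bigr\|_A$. Consequently one must either settle for a smaller universal constant $c$ or retain explicit $n$-dependence in it, and it is the careful propagation of these two losses through the $p$-th power that decides whether the precise factor $\tfrac{1}{2^{p+1}n^{p-1}}$ in \eqref{eq4.11} is genuinely attained; this bookkeeping is the delicate point of the argument.
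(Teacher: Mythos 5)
Your upper bound is correct and is essentially the paper's own argument: from $|\langle T_kx,x\rangle_A|^2\le\langle(B_k^2+C_k^2)x,x\rangle_A$ (with $B_k,C_k$ the $A$-Cartesian parts of $T_k$) one raises to the power $p$, applies the H\"older--McCarthy inequality \eqref{eq2.6}, sums over $k$ and takes the supremum. Both you and the paper are in fact bounding $\sup_{\|x\|_A=1}\sum_k|\langle T_kx,x\rangle_A|^{2p}$; the paper identifies this quantity with $w^p_{2p,A}$, which is the normalization intended in the statement, so your parenthetical worry about ``the $2p$-th power'' is a notational issue of the paper, not of your argument.

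The lower bound is where your proposal has a genuine gap, and you say so yourself: after Jensen's inequality you reduce everything to an estimate of the form $\sup_{\|x\|_A=1}\sum_k|\langle T_kx,x\rangle_A|^2\ge c\,\bigl\|\sum_k(B_k^2+C_k^2)\bigr\|_A$ and leave $c$ undetermined, correctly observing that the single-operator constant of \eqref{eq4.3} does not propagate because the numbers $\langle T_kx,x\rangle_A$ need not be simultaneously large at one vector $x$. The paper's proof never passes through the Euclidean radius of the $T_k$; the missing idea is a parallelogram trick with the sign combinations $B_k\pm C_k$. Concretely, $|\langle T_kx,x\rangle_A|^{2p}=\bigl(\langle B_kx,x\rangle_A^2+\langle C_kx,x\rangle_A^2\bigr)^p\ge 2^{-p}\,|\langle(B_k\pm C_k)x,x\rangle_A|^{2p}$; summing over $k$, applying Jensen (cost $n^{1-p}$), and using that for the $A$-selfadjoint tuple $(B_k\pm C_k)_k$ one has $\sup_{\|x\|_A=1}\bigl(\sum_k\langle(B_k\pm C_k)x,x\rangle_A^2\bigr)^p=\bigl\|\sum_k(B_k\pm C_k)^2\bigr\|_A^p$, the paper then adds the two sign choices and invokes $(B_k+C_k)^2+(B_k-C_k)^2=2(B_k^2+C_k^2)=T_k^{\sharp_A}T_k+T_kT_k^{\sharp_A}$ to land exactly on the constant $2^{-(p+1)}n^{1-p}$. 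To complete your argument you would have to replace your ``base estimate'' by this $\pm$ device --- or else prove the displayed sup-equals-norm identity for $A$-selfadjoint tuples, which is precisely the simultaneity point you flag. It is worth noting that in the paper this identity, as well as the subsequent step $\|X\|_A^p+\|Y\|_A^p\ge\|X+Y\|_A^p$, is asserted rather than proved, so your suspicion about whether the stated constant is genuinely attained is not idle; but as written your attempt does not establish the left-hand inequality of \eqref{eq4.11}.
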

\begin{proof}
	Let $B_k+iC_k$ be the $A$-Cartesian decomposition of $T_k$ for all $k=1,\cdots,n$.
	Then, we have
	\begin{align*}
	\left| {\left\langle {T_k x,x} \right\rangle_A } \right|^{2p}  &= \left( {\left\langle {B_k x,x} \right\rangle_A ^2  + \left\langle {C_k x,x} \right\rangle_A ^2 } \right)^p  \\ 
	&\ge \frac{1}{{2^p }}\left( {\left| {\left\langle {B_k x,x} \right\rangle_A } \right| + \left| {\left\langle {C_k x,x} \right\rangle_A } \right|} \right)^{2p}  \\ 
	&\ge \frac{1}{{2^p }}\left| {\left\langle {B_k x,x} \right\rangle_A  + \left\langle {C_k x,x} \right\rangle_A } \right|^{2p}  \\ 
	&= \frac{1}{{2^p }}\left| {\left\langle {B_k  \pm C_k x,x} \right\rangle_A } \right|^{2p}.    
	\end{align*} 
	Summing over $k$ and then taking the supremum over all $A$-unit vector $x\in \mathscr{H}$, we get
	\begin{align*}
	w^p_{2p,A} \left( {T_1 , \cdots ,T_n } \right) &\ge \frac{1}{{2^p }}\mathop {\sup }\limits_{\left\| x \right\|_A = 1} \sum\limits_{k = 1}^n {\left| {\left\langle {B_k  \pm C_k x,x} \right\rangle_A } \right|^{2p} }  
	\\
	&\ge \frac{1}{{2^p }}\frac{1}{{n^{p - 1} }}\mathop {\sup }\limits_{\left\| x \right\|_A = 1} \left( {\sum\limits_{k = 1}^n {\left| {\left\langle {B_k  \pm C_k x,x} \right\rangle_A } \right|^2 } } \right)^p  \qquad  (\text{by Jensen's inequality})\\ 
	&= \frac{1}{{2^p }}\frac{1}{{n^{p - 1} }}\left\| {\sum\limits_{k = 1}^n {\left( {B_k  \pm C_k } \right)^2 } } \right\|_A^p.  
	\end{align*}
	Thus,
	\begin{align*}
	2w^p_{2p,A} \left( {T_1 , \cdots ,T_n } \right) &\ge \frac{1}{{2^p }}\frac{1}{{n^{p - 1} }}\left\| {\sum\limits_{k = 1}^n {\left( {B_k  + C_k } \right)^2 } } \right\|^p  + \frac{1}{{2^p }}\frac{1}{{n^{p - 1} }}\left\| {\sum\limits_{k = 1}^n {\left( {B_k  - C_k } \right)^2 } } \right\|_A^p  \\ 
	&\ge \frac{1}{{2^p }}\frac{1}{{n^{p - 1} }}\left\| {\sum\limits_{k = 1}^n {\left( {B_k  + C_k } \right)^2 }  + \sum\limits_{k = 1}^n {\left( {B_k  - C_k } \right)^2 } } \right\|_A^p  
	\\ 
	&= \frac{1}{{2^p }}\frac{1}{{n^{p - 1} }}\left\| {\sum\limits_{k = 1}^n {\left\{ {\left( {B_k  + C_k } \right)^2  + \left( {B_k  - C_k } \right)^2 } \right\}} } \right\|_A^p 
	 \\ 
	&=  \frac{1}{{n^{p - 1} }}\left\| {\sum\limits_{k = 1}^n {B_k^2  + C_k^2 } } \right\|_A^p  \\ 
	&=     \frac{1}{{n^{p - 1} }}\left\| {\sum\limits_{k = 1}^n {\frac{T_k^{\sharp_A} T_k  + T_k T_k^{\sharp_A}}{2} } } \right\|_A^p
	\\ 
	&=   \frac{1}{{2^pn^{p - 1} }}\left\| {\sum\limits_{k = 1}^n { T_k^{\sharp_A} T_k  + T_k T_k^{\sharp_A}  } } \right\|_A^p,  
	\end{align*}
	and hence,
	\begin{align*}
	w^p_{2p} \left( {T_1 , \cdots ,T_n } \right)   \ge    \frac{1}{{2^{p+1}n^{p - 1} }}\left\| {\sum\limits_{k = 1}^n {T_k^{\sharp} T_k  + T_k T_k^{\sharp} } } \right\|_A^p,  
	\end{align*}
	which proves the left hand side of the inequality in \eqref{eq4.11}.

	To prove the second inequality, for every $A$-unit vector $x\in \mathscr{H}$ we have
	\begin{align*}
	\sum\limits_{k = 1}^n {\left| {\left\langle {T_k x,x} \right\rangle_A } \right|^{2p} }  &= \sum\limits_{k = 1}^n {\left( {\left\langle {B_k x,x} \right\rangle_A ^2  + \left\langle {C_k x,x} \right\rangle_A ^2 } \right)^p }  \\ 
	&\le \sum\limits_{k = 1}^n {\left( {\left\langle {B_k^2 x,x} \right\rangle_A  + \left\langle {C_k^2 x,x} \right\rangle_A } \right)^p }  \\ 
	&= \sum\limits_{k = 1}^n {\left\langle {\left( {B_k^2  + C_k^2 } \right)x,x} \right\rangle_A ^p },
	\end{align*}
	which implies that 
	\begin{align*} 
	\mathop {\sup }\limits_{\left\| x \right\|_A = 1}\sum\limits_{k = 1}^n {\left| {\left\langle {T_k x,x} \right\rangle } \right|^{2p} } =w_{2p,A}^p \left( {T_1 , \cdots ,T_1 } \right)&\le \mathop {\sup }\limits_{\left\| x \right\|_A = 1} \sum\limits_{k = 1}^n {\left\langle {\left( {B_k^2  + C_k^2 } \right)x,x} \right\rangle_A ^p }  
	\\
	&= \left\| {\sum\limits_{k = 1}^n {\left( {B_k^2  + C_k^2 } \right)^p } } \right\|_A 
	\\
	&= \frac{1}{{2^p }}\left\| {\sum\limits_{k = 1}^n {\left( {T_k^{\sharp_A} T_k  + T_k T_k^{\sharp_A} } \right)^p } } \right\|_A,
	\end{align*}
	which proves the right hand side of \eqref{eq4.11}.
\end{proof}

\begin{remark}
	Clearly, by setting $n=1$ and $p=1$ in \eqref{eq4.11} we recapture \eqref{eq4.2}.
\end{remark}
A very interesting case of \eqref{eq4.11} is considered in the following corollary.
\begin{corollary}
	Let $T,S\in \mathscr{B}\left(\mathscr{H}\right)$. Then	
	\begin{align}
	\label{eq4.12}\frac{1}{{2^{2p} }}\left\| {  {T^{\sharp_A} T  + T T^{\sharp_A} +S^{\sharp_A} S + S S^{\sharp_A} } } \right\|_A^p &\le w^p_{2p,A} \left( {T,S} \right) 
	\\
	&\le \frac{1}{{2^p }}\left\| {  {\left( {T^{\sharp_A} T  + T T^* } \right)^p +\left( {S^{\sharp_A} S + S S^{\sharp_A}  } \right)^p} } \right\|_A\nonumber
	\end{align}
	for all $p\ge1$. 
\end{corollary}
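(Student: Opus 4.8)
The plan is to derive this corollary directly as the special case $n=2$ of the preceding theorem, namely inequality \eqref{eq4.11}, so that no new analytic argument is required. First I would take $T_1 = T$ and $T_2 = S$ (both assumed to lie in $\mathscr{B}_A\left(\mathscr{H}\right)$ so that the $A$-adjoints $T^{\sharp_A}$ and $S^{\sharp_A}$ are defined), whereupon the generalized Euclidean $A$-radius $w_{2p,A}\left(T_1,\dots,T_n\right)$ becomes $w_{2p,A}\left(T,S\right)$ and each sum $\sum_{k=1}^{n}$ collapses to the two terms indexed by $k=1,2$.

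The single point requiring attention is the simplification of the left-hand constant. In \eqref{eq4.11} the lower bound carries the factor $\frac{1}{2^{p+1}n^{p-1}}$, and at $n=2$ the exponent arithmetic gives
\[
\frac{1}{2^{p+1}\cdot 2^{p-1}} = \frac{1}{2^{(p+1)+(p-1)}} = \frac{1}{2^{2p}},
\]
which is precisely the constant appearing on the left of \eqref{eq4.12}. The middle quantity $w^p_{2p,A}\left(T,S\right)$ is unchanged, and expanding the right-hand sum $\sum_{k=1}^{2}\left(T_k^{\sharp_A}T_k + T_k T_k^{\sharp_A}\right)^p$ reproduces $\frac{1}{2^p}\left\|\left(T^{\sharp_A}T+TT^{\sharp_A}\right)^p + \left(S^{\sharp_A}S+SS^{\sharp_A}\right)^p\right\|_A$ verbatim (up to the evident typographical slip $TT^{*}$ for $TT^{\sharp_A}$ in the displayed statement).

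Since the entire statement is obtained by substitution, there is essentially no obstacle: the proof reduces to the one-line exponent identity $2^{p+1}\cdot 2^{p-1}=2^{2p}$ together with writing out the length-two sums. I would simply record this by invoking \eqref{eq4.11} with $n=2$, $T_1=T$, and $T_2=S$.
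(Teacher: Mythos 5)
Your proposal is correct and coincides with the paper's own proof, which simply states ``Setting $n=2$ in \eqref{eq4.11}''; your verification of the constant via $2^{p+1}\cdot 2^{p-1}=2^{2p}$ and your observation that $TT^{*}$ in the displayed right-hand side is a typographical slip for $TT^{\sharp_A}$ are both accurate.
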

\begin{proof}
	Setting $n=2$ in \eqref{eq4.11}. 
\end{proof}

\begin{remark}
	In particular, setting $p=1$ in \eqref{eq4.12} we get
	\begin{align*}
	\frac{1}{4} \left\| {  {T^{\sharp_A} T  + T T^{\sharp_A} +S^{\sharp_A} S + S S^{\sharp_A}} } \right\|_A  &\le w_{\rm{e},A} \left( {T,S} \right) 
	\\
	&\le \frac{1}{{2  }}\left\| {T^{\sharp_A} T  + T T^{\sharp_A}+ S^{\sharp_A} S + S S^{\sharp_A}   } \right\|_A.
	\end{align*}
	Moreover, if we choose $T=S$, then
	\begin{align*}
	\frac{1}{2} \left\| {  {T^{\sharp_A} T  + T T^{\sharp_A}  } } \right\|_A   \le w_{\rm{e},A} \left( {T,T} \right) 
	\le  \left\| {T^{\sharp_A} T  + T T^{\sharp_A}} \right\|_A.
	\end{align*}
\end{remark}

\begin{remark}
	A lower and  upper bounds for the Rhombic numerical radius could be deduced as follows:
	
	In \eqref{eq4.8} the inequality holds for any $p\ge1$. Setting $p=2q$, then \eqref{eq4.8} reduces to  
	\begin{align*}
	w_{2q,A} \left( {T_1 , \cdots ,T_n } \right)\le w_{R,A} \left( {T_1 , \cdots ,T_n } \right) \le n^{1 - \frac{1}{2q}} w_{2q,A} \left( {T_1 , \cdots ,T_n } \right).
	\end{align*}
	which implies that
	\begin{align}
	w^q_{2q,A} \left( {T_1 , \cdots ,T_n } \right)\le w^q_{R,A} \left( {T_1 , \cdots ,T_n } \right) \le n^{q - \frac{1}{2}} w^q_{2q,A} \left( {T_1 , \cdots ,T_n } \right). \label{eq4.13}
	\end{align} 
	Combining the inequalities \eqref{eq4.11} with \eqref{eq4.13} we get	
	\begin{align*}
	\frac{1}{{2^{2q+1}n^{q - 1} }}\left\| {\sum\limits_{k = 1}^n {T_k^{\sharp_A} T_k  + T_k T_k^{\sharp_A} } } \right\|_A^q 
	&\le w^q_{2q,A} \left( {T_1 , \cdots ,T_n } \right) 
	\\
	&\le w^q_{R,A} \left( {T_1 , \cdots ,T_n } \right) 
	\\
	&\le n^{q - \frac{1}{2}} w^q_{2q,A} \left( {T_1 , \cdots ,T_n } \right)
	\\
	&\le \frac{n^{q - \frac{1}{2}}}{{2^q }}\left\| {\sum\limits_{k = 1}^n {\left( {T_k^{\sharp_A}T_k  + T_k T_k^{\sharp_A} } \right)^q } } \right\|_A 
	\end{align*}
	for any $q\ge \frac{1}{2}$.
\end{remark}



\vspace{0.2cm}

\end{document}